\newcommand{\dd}{\mathrm{d}}
\providecommand{\mb}[1]{\mathbb{#1}}
\providecommand{\abs}[1]{\lvert#1\rvert}
\providecommand{\norm}[1]{\lVert#1\rVert}
\def\pf{{\it Proof:}~}
\newtheorem{defin}{Definition}[section]
\newtheorem{theorem}{Theorem}[section]
\newtheorem{lemma}{Lemma}[section]
\newtheorem{con}{Conjecture}
\newtheorem{cond}{Condition}
\newtheorem{conj}{Conjecture}
\newtheorem{prop}{Proposition}[section]
\newtheorem{lem}{Lemma}[section]
\newtheorem{rmk}{Remark}[section]
\newtheorem{corollary}{Corollary}[section]
\newcommand{\beq}{\begin{equation}}
\newcommand{\eeq}{\end{equation}}
\newcommand{\beqn}{\begin{eqnarray}}
\newcommand{\beqnn}{\begin{eqnarray*}}
\newcommand{\eeqn}{\end{eqnarray}}
\newcommand{\eeqnn}{\end{eqnarray*}}
\newcommand{\bprop}{\begin{prop}}
\newcommand{\eprop}{\end{prop}}
\newcommand{\bteo}{\begin{teo}}
\newcommand{\bcor}{\begin{cor}}
\newcommand{\ecor}{\end{cor}}
\newcommand{\bcon}{\begin{con}}
\newcommand{\econ}{\end{con}}
\newcommand{\bcond}{\begin{cond}}
\newcommand{\econd}{\end{cond}}
\newcommand{\bconj}{\begin{conj}}
\newcommand{\econj}{\end{conj}}
\newcommand{\eteo}{\end{teo}}
\newcommand{\brm}{\begin{rmk}}
\newcommand{\erm}{\end{rmk}}
\newcommand{\blem}{\begin{lem}}
\newcommand{\elem}{\end{lem}}
\newcommand{\ben}{\begin{enumerate}}
\newcommand{\een}{\end{enumerate}}
\newcommand{\bei}{\begin{itemize}}
\newcommand{\eei}{\end{itemize}}
\newcommand{\bdf}{\begin{defin}}
\newcommand{\edf}{\end{defin}}
\newcommand{\bpr}{\begin{proof}}
\newcommand{\epr}{\end{proof}}
\newenvironment{proof}{\noindent {\em Proof}.\,\,}{\hspace*{\fill}$\halmos$\medskip}
\newcommand{\halmos}{\rule{1ex}{1.4ex}}
\def \qed {{\hspace*{\fill}$\halmos$\medskip}}
\newcommand{\Z}{{\mathbb Z}}
\newcommand{\R}{{\mathbb R}}
\renewcommand{\P}{{\mathbb P}}
\newcommand{\Q}{\mathbb Q}
\newcommand{\N}{{\mathbb N}}
\renewcommand{\o}{\omega}
\renewcommand{\le}{\leq}
\definecolor{Red}{rgb}{1,0,0}
\title{Quenched invariance principle for random walk in
time-dependent  balanced random environment}
\author{Jean-Dominique Deuschel%
  \thanks{Electronic address: \texttt{deuschel@math.tu-berlin.de}}}
\affil{Instit\"ut f\"ur Mathematik\\ Technische Universit\"at Berlin}
\author{Xiaoqin Guo%
  \thanks{Electronic address: \texttt{guo297@purdue.edu}, Partially supported by an AMS-Simons Travel Grant.}}
\affil{Department of Mathematics\\ Purdue University}
\author{Alejandro F. Ram\'\i rez%
  \thanks{Electronic address: \texttt{aramirez@mat.uc.cl},
Partially supported by Iniciativa Cient\'\i fica Milenio NC120062,
Nucleus Millenium Stochastic Models of Complex and Disordered Systems
and by Fondo Nacional de Desarrollo Cient\'\i fico
y Tecnol\'ogico grant 1141094.
}}
\affil{Facultad de Matem\'aticas\\ Pontificia Universidad Cat\'olica de Chile}
\begin{document}

\maketitle

\vskip-1cm

\let\thefootnote\relax\footnote{\footnotesize
{\it 2010 Mathematics Subject Classification.} 35K10, 60K37, 82D30.}

\let\thefootnote\relax\footnote{\footnotesize
{\it Keywords.} Random walk in random environment,
maximum principle, quenched central limit theorem}

\abstract{We prove a quenched central limit theorem for balanced
random walks in time-dependent ergodic random environments which is not necessarily nearest-neighbor.
We assume that the environment satisfies  appropriate
ergodicity and 
ellipticity conditions. The proof is based on the use of
a maximum principle for parabolic difference operators.



\section{Introduction}
We consider random walks in a balanced time-dependent 
random environment. Under a mild ergodicity
assumption on the law of the environment and a moment
condition on the jump probabilities,
we prove a quenched central limit theorem (QCLT). 
Our results extend previous results of Lawler \cite{L-82} and of
Guo and Zeitouni \cite{GZ-10} and are based on the use of
a new maximum principle for parabolic difference operators.
Furthermore, they can be  considered as a version of discrete 
homogenization of stochastic parabolic operators in non-divergence form
without uniform ellipticity
(for homogenization results in a similar
 PDE settings, we refer to \cite{AS-14, Lin-15}).

We state our results in both discrete and continuous time settings.

\subsection{Discrete time RWRE}
Let $U$ be a nonempty finite subset  of $\mathbb Z^d$ which will be called the {\it jump range}.
Define a set of probability vectors
 \[
 \mathcal P=\mathcal P(U)
 :=\left\{ v=\{v( e)>0:e\in
U\}:\sum_{e\in U}v(e)=1\right\}.
 \]
Consider a discrete time stochastic process $\omega:=\{\omega_n:n\in
\mathbb N\}$
 with state space $\Omega:=\mathcal P^{\mathbb Z^d}$,
so that $\omega_n:=\{\omega_n(x):x\in\mathbb Z^d\}$ with
$\omega_n(x):=\{\omega_n(x,e):e\in U\}\in\mathcal P$.
 We call
$\Omega^{\mathbb N}$ the {\it environmental space} while an element
\begin{equation}
\label{environment}
\omega\in\Omega^{\mathbb N}
\end{equation} a {\it discrete time environment}.
Note that throughout this construction the set $U$ 
is fixed and is not dependent on $\omega$.
 Let us denote by $\mathbb
P$ the law of $\omega$ and $\mathbb E_{\mathbb P}$ its expectation.
 Given $\omega\in\Omega^{\mathbb N}$, $x\in\mathbb Z^d$ and
$n\in\mathbb Z$
 consider  the random walk $\{X_m:m\ge 0\}$
 with
a law $P_{x,n,\omega}$ on $(\mathbb Z^d)^{\mathbb N}$ defined through
$P_{x,n,\omega}(X_n=x)=1$ and the transition probabilities
\[
P_{x,n,\omega}(X_{n+k+1}=y+e|X_{n+k}=y)=\omega_{n+k}(y,e),
\]
for $k\ge 0$, $y\in\mathbb Z^d$ and $e\in U$.
We call this process a {\it discrete time  random walk in
time-dependent   random environment} and
 call 
$P_{x,n,\omega}$  the {\it quenched law} of
the  random walk starting from $x$ at time $n$.
The expectation of the law $P_{x,n,\omega}$ is denoted by $E_{x,n,\omega}$.

Given any topological space $T$, we will denote
by $\mathcal B(T)$ the corresponding Borel sets.
For  $x\in\mathbb Z^d$ and $n\ge 0$   define the
 space-time shift
 $\theta_{n,x}:\Omega^{\mathbb N}\to \Omega^{\mathbb N}$
by $(\theta_{n,x}\omega)_m(y):=\omega_{n+m}(y+x)$ for
all $m\ge 0$ and $y\in\mathbb Z^d$.
Throughout, we will assume that $\mathbb P$ is stationary
under the action of  $\{\theta_{n,x}:n\ge 0,x\in\mathbb Z^d\}$.
Let now $Z\subset \N\times\Z^d$. We will
say that
 $\{\theta_{n,e}:(n,e)\in Z\}$ is an ergodic
family of transformations for the probability space $(\Omega^{\mathbb N},
\mathcal B( \Omega^{\mathbb N}), \mathbb P)$
if whenever $A\in\mathcal B( \Omega^{\mathbb N})$
satisfies $\theta^{-1}_{n,e} A=A$ for all $(n,e)\in\ Z,$ then $\mathbb P(A)\in\{0, 1\}$. 
Note that $\{\theta_{n,e}:(n,e)\in Z\}$
is an ergodic family of transformations for $\P$
if and only if   $\{\theta_{n,e}:(n,e)\in \langle Z\rangle\}$
is an ergodic family for $\P$,
where $\langle Z\rangle$ denotes the subset of $\mathbb N\times\mathbb Z^d$
generated 
by $Z$. 

 When the environment is time-independent, i.e, $\omega_n=\omega_{n+1}$ for all $n\ge 0$, we 
 call $\omega$ a {\it static} environment.
In this case, we may drop the time subscripts. E.g, we may write $\omega_n(x,e), \theta_{n,x}$ and $P_{x,n,\omega}$ simply as $\omega(x,e), \theta_x$ and $P_{x,\omega}$.

Define the space-time {\it environmental process} as the discrete time Markov chain

\[
\bar\omega_n:=\theta_{n,X_n}\omega, \qquad n\ge 0,
\]
with state space $\Omega^{\mathbb N}$.  Here
the random walk $\{X_n:n\ge 0\}$ has law
$P_{0,0,\omega}$. 
In general, if $\omega$ is distributed according to some
law $\mu$ on $\Omega$, we define $P_\mu:=\int P_{0,0,\omega} d\mu$.
We will say that $\mu$ is an invariant distribution for the
environmental process if  $\bar\omega_n$ under $P_\mu$
has identical distribution for $n\ge 0$.

Let $D\subset U$.
 We say that a  random environment with law $\P$ is {\it balanced
in  $D$} if for every $x\in\mathbb Z^d$
and $n\ge 0$
\[
\P\left(\sum_{e\in D}e\omega_n(x,e)=0\right)=1.
\]
We say that the environment is {\it uniformly elliptic in $D$} with ellipticity constant  $\kappa>0$ if
\[
\mathbb P\left(\omega_n(x,e)>\kappa\text{ for all $e\in D\setminus\{0\}$, $x\in\Z^d$ and $n\ge 0$}\right)=1.
\]
When the environment is balanced (resp. uniformly elliptic) in $U$,
we simply say that it is balanced (resp. uniformly elliptic). We call the environment {\it elliptic}  if the jump range $U$ spans $\R^d$. 

Let us now recursively define the range
of the random walk after $n$ steps as
$U_1:=U$, while for $n\ge 1$
\[
U_{n+1}:=\left\{y\in\mathbb Z^d:y=x+e\ {\rm for}\ {\rm some}\
x\in U_n\ {\rm and}\ e\in U\right\}.
\]
Let also for $n\ge 1$ and $x,y\in \mathbb Z^d$
\[
p_{n}(x,y):=P_{x,0,\omega}(X_n=y).
\]
Now set

\begin{equation}
\label{ven}
V_{n}(x):=\{p_n(x,x+z)z :z\in U_n\} \subset\R^d
\end{equation}
Throughout, given a subset $V\subset\mathbb R^d$, we will
denote by $conv(V)$ its convex hull and by
$|V|$ its Lebesgue measure.
Define for $n\ge 1$

\begin{equation}
\label{varen}
\varepsilon_n(x):=\left(p_n(x,x)\left|conv(V_n(x))\right|\right)^{1/(d+1)}.
\end{equation}
Denote also by $\{e_1,\ldots,e_d\}$ the canonical  basis
of $\mathbb Z^d$.

We say that the random walk $X_\cdot$ in random environment satisfies the {\it quenched central limit theorem} (QCLT) with a non-degenerate covariance matrix $A$ if
\begin{quote}
\it For almost all environments $\omega$, under $P_{0,0,\omega}$, the sequence $X_{[n\cdot]}/\sqrt{n}$ converges
in law to a Brownian motion with a deterministic non-degenerate covariance matrix $A$.
\end{quote}

\begin{theorem}
\label{qclt} Consider a discrete time  random walk
in an elliptic balanced time-dependent random environment with law $\mathbb P$. Suppose that the
 family of shifts $\{\theta_{1,e}:e\in U\}$
is ergodic and that

\begin{equation}
\label{ellipt-cond}
\inf_{n\ge 1}\mathbb E_{\mathbb P}\left[\varepsilon_n^{-(d+1)}(0)\right]<\infty.
\end{equation}
 Then, the following are satisfied.

\begin{itemize}

\item[(i)] The environmental process has a unique invariant 
probability measure $\nu$
which is absolutely continuous with respect to $\P$.

\item[(ii)] 
The QCLT holds with a
 non-degenerate covariance
matrix $A=\{a_{i,j}:1\le i,j\le d\}$, where

\[
a_{i,j}:=\sum_{e\in U}(e\cdot e_i)(e\cdot e_j)\int \omega_0(0,e)d\nu.
\]
\end{itemize}

\end{theorem}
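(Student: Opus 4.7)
The plan is to follow the classical martingale-plus-environmental-process strategy adapted to the balanced, possibly non-uniformly-elliptic, time-dependent setting, with the parabolic ABP-type maximum principle doing the heavy lifting wherever uniform ellipticity would usually be invoked.

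First, I would exploit the balanced assumption to observe that under $P_{0,0,\omega}$ the process $X_n$ is itself a martingale (with respect to its natural filtration), with conditional covariance at step $k$ given by $a(\bar\omega_k):=\sum_{e\in U}(e\cdot e_i)(e\cdot e_j)\,\omega_0(0,e)$ evaluated at $\bar\omega_k=\theta_{k,X_k}\omega$. Hence the QCLT reduces, via the martingale CLT (Lindeberg form, using the moment control coming from \eqref{ellipt-cond}), to proving that for $\mathbb{P}$-a.e.\ $\omega$,
\[
\frac{1}{n}\sum_{k=0}^{n-1} a(\bar\omega_k)\;\longrightarrow\; A\quad\text{as }n\to\infty,
\]
with $A$ deterministic and non-degenerate. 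So the real work is to prove part (i) — existence, uniqueness and absolute continuity of an invariant measure $\nu$ for the environmental process $\bar\omega_n$ — and then to invoke an ergodic theorem for $\bar\omega_n$ under $\nu$, transferring the conclusion to $\mathbb P$-a.e.\ $\omega$ via $\nu\ll\mathbb P$.

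For the existence of $\nu$, I would form the Ces\`aro averages $\nu_n := \frac{1}{n}\sum_{k=0}^{n-1} \mathbb P\circ \tau^{-k}$, where $\tau$ is the Markov kernel driving $\bar\omega_n$, and extract a weak limit. Absolute continuity $\nu\ll\mathbb P$ is the crucial and hardest step; this is where the parabolic maximum principle enters. The idea is that for any nonnegative test function $f$, the expression $\nu_n(f)$ equals an average of $E_{0,0,\omega}[f(\bar\omega_k)]=E_{0,0,\omega}[f(\theta_{k,X_k}\omega)]$, and the parabolic ABP inequality controls such averages by an $L^{d+1}$-type norm of $f$ weighted by $\varepsilon_n^{-(d+1)}$. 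Concretely, one obtains an estimate of the form $\nu_n(f)\leq C\,\mathbb E_{\mathbb P}[f^{(d+1)/d}\,\varepsilon^{-(d+1)}]^{d/(d+1)}$, so that the integrability hypothesis \eqref{ellipt-cond} yields equi-absolute-continuity of $\{\nu_n\}$ with respect to $\mathbb{P}$ and produces an $L^{(d+1)/d}(\mathbb P)$ density in the limit. This is the main technical obstacle: formulating and applying the maximum principle in the time-dependent (parabolic) setting, and verifying that \eqref{ellipt-cond} is the correct quantitative hypothesis that powers the ABP-type bound.

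With $\nu\ll\mathbb P$ in hand, uniqueness and ergodicity of $\nu$ follow from the ergodicity of the space-time shift family $\{\theta_{1,e}:e\in U\}$: two invariant absolutely continuous measures would have Radon--Nikodym derivatives invariant under the environmental dynamics, and by a standard coupling argument (propagating through the generator $U$, which generates the assumed ergodic family) they must coincide; the same argument identifies the invariant $\sigma$-algebra with the shift-invariant one and hence gives ergodicity. I would then apply Birkhoff's ergodic theorem to $a(\bar\omega_k)$ under $P_\nu$, obtaining the limiting matrix $A=\int a(\omega)\,d\nu$, and transfer to $\mathbb P$-a.e.\ $\omega$ using $\nu\ll\mathbb P$. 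Finally, non-degeneracy of $A$ follows because $U$ spans $\mathbb R^d$: if $v^\top A v=0$, then $(v\cdot e)^2\omega_0(0,e)=0$ for all $e\in U$ on a set of positive $\nu$-measure, hence of positive $\mathbb P$-measure; but by the elliptic assumption combined with the positivity of each $\omega_0(0,e)$ (recall $\mathcal P$ consists of strictly positive vectors), this forces $v=0$. The martingale CLT then gives the convergence of $X_{[n\cdot]}/\sqrt n$ to a Brownian motion with covariance $A$ under $P_{0,0,\omega}$ for $\mathbb P$-a.e.\ $\omega$, completing the proof.
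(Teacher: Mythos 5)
Your overall skeleton is the right one: the balanced hypothesis makes $X_n$ a $P_{0,0,\omega}$-martingale, so part (ii) reduces via a martingale CLT and an ergodic theorem for $\bar\omega_n$ to part (i), and the crux of part (i) is an ABP-type parabolic maximum principle giving a bound of the form $\int g\,d\nu\le C\norm{g/\varepsilon}_{L^{d+1}(\P)}$. This matches the paper's strategy (which delegates the passage from (i) to (ii) to its Theorem~\ref{kozlov}). However, the way you propose to get the invariant measure has a real gap, and it is exactly the part the paper spends the most effort on.

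You propose to use the Ces\`aro averages $\nu_n=\tfrac1n\sum_{k<n}\P\circ\tau^{-k}$, extract a weak limit, and claim that the parabolic maximum principle directly controls $\nu_n(f)$. But the parabolic ABP estimate (Theorem~\ref{maximum}) only applies to solutions of a boundary value problem on a finite box $\mathcal D\subset\Z^d\times\Z$; concretely it bounds $u(0,0)$ where $u(x,m)=E_{x,m,\omega}[\sum_{k<\tau}f(\bar\omega_k)]$ and $\tau$ is the exit time from the box. Since $\tau\le N^2$ and $f\ge0$, this quantity satisfies $u(0,0)\le\sum_{k<N^2}E_{0,0,\omega}[f(\bar\omega_k)]$; in other words the ABP gives an \emph{upper} bound on a quantity that is a \emph{lower} bound for the Ces\`aro sum, which is the wrong direction to control $\nu_n(f)$. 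The contributions of the walk after it leaves the box are not captured, and your sketch does not say how to account for them. The paper solves this by first periodizing the environment on $K_N=\{|z|_\infty\le N\}\times\{0,\ldots,N^2\}$ so that the environmental process becomes a \emph{finite} Markov chain $(\theta_{n,X_n}\omega^{(N)})$ with a genuine invariant probability $\nu_N\ll\P_N$; it then writes $\int g\,d\nu_N$ in resolvent form $\sum_k\rho^k\int S^kg\,d\nu_N$, breaks the resolvent into excursions $[\tau_m,\tau_{m+1})$ using the geometric moment bound of Lemma~\ref{L-one}, and only \emph{then} applies Theorem~\ref{maximum} to the single-excursion Green function. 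The periodization is not cosmetic: it is what makes the invariance relation $\int gd\nu_N=\int S^kgd\nu_N$ available, what makes the empirical measure $\P_N$ converge to $\P$ by the multidimensional ergodic theorem so that $\norm{g/\varepsilon_1}_{L^{d+1}(\P_N)}\to\norm{g/\varepsilon_1}_{L^{d+1}(\P)}$, and what prevents the walk from ever leaving the box. Without some device playing this role, the passage from the ABP bound to control of a (near-)invariant measure is missing.

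Two smaller points. First, your hypothesis \eqref{ellipt-cond} is an infimum over $n$; the argument as the paper runs it first fixes the $k$ realizing $\E_{\P}[\varepsilon_k^{-(d+1)}]<\infty$ and proves the existence of an invariant measure $\nu_k$ for the $k$-skeleton $\bar\omega_n^{(k)}=\theta_{nk,X_{nk}}\omega$; one then must promote $\nu_k$ to an invariant measure for $\bar\omega_n$ by averaging the kernel $R$ over $k$ steps and verifying absolute continuity along the way. You treat only the case $k=1$ and do not address this reduction. Second, the quantitative form of the bound you wrote, $\nu_n(f)\le C\,\E_{\P}[f^{(d+1)/d}\varepsilon^{-(d+1)}]^{d/(d+1)}$, does not match what Theorem~\ref{maximum} delivers (the correct right-hand side is $\norm{f/\varepsilon}_{L^{d+1}(\P)}$, i.e., $\E_{\P}[(f/\varepsilon)^{d+1}]^{1/(d+1)}$); it would be worth re-deriving the exponents, since they are what determine the density's integrability and hence absolute continuity with respect to $\P$.
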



Theorem \ref{qclt} extends the static version of the QCLT proved by Lawler \cite{L-82} for
uniformly elliptic environments and by Guo and
Zeitouni \cite{GZ-10} for elliptic environments.
Other recent related results for random walks
in balanced static environments include
 Berger and Deuschel  \cite{BD-14}
and Baur \cite{Ba-14}. On the other hand,
it should be pointed out that several results
exist proving QCLT
for random walks in time-dependent environments,
but in general under mixing condition which are
stronger that our ergodicity assumption
(see for example \cite{DKL-08} or \cite{A-14}).
Recently in \cite{ACDS-16}, the QCLT is obtained for continuous-time random walk in time-dependent ergodic random conductance under similar moment conditions as in \eqref{ellipt-cond-cont}  on the jump rates.

\begin{rmk}
For nearest-neighbor random walks in a static random environment, a similar criteria for QCLT as \eqref{ellipt-cond} for $n=1$ is obtained by Guo and Zeitouni \cite{GZ-10}, 
where $\mb P$ is ergodic with respect to the spatial shifts $\{\theta_x: x\in\Z^d\}$. 
Note that in the time-dependent case, neither do we demand the environment to be ergodic under the spatial shifts, nor under the time shifts alone.

One may replace the ergodic family of transformations $\{\theta_{1,e}, e\in U\}$ with $\{\theta_{n,x}: (n,x)\in Z\}$ for some set $Z\subset\N\times\Z^d$. Clearly, the smaller the set $Z$ is, the stronger our ergodicity assumption is. A natural question is, can we weaken our condition by enlarging the ergodic family of transformations? We give a negative answer through a counterexample in Section~\ref{section-counterexample}, where the ergodic family of transformations is larger than $\{\theta_{1,e},e\in U\}$ but the QCLT fails. (In that case, a CLT holds for almost all $\omega$ but with a random covariance matrix dependent on $\omega$.)
\end{rmk}

\begin{rmk}  The non-degeneracy of the matrix $A$ of part $(ii)$ of
 Theorem \ref{qclt}
follows from the fact that for any vector $u=(u_1,\ldots,u_d)$
one has that
\[
u\cdot Au=\sum_{e\in U}(e\cdot u)^2\int \omega_0(0,e)d\nu>0.
\]
Indeed, \eqref{ellipt-cond} implies that
 the vectors of $U$ span $\mathbb R^d$. 
On the other hand,
$\nu$ is absolutely continuous with respect to $\P$, which implies that
$A$ is a positive-definite matrix.
\end{rmk}

\begin{rmk} Condition \eqref{ellipt-cond} of Theorem~\ref{qclt}
is always satisfied if the environment is balanced and uniformly elliptic  (with constant $\kappa>0$) in a subset $D\subset U$ such that $\left|conv(D)\right|>0$.
Indeed, since the environment is balanced in D, we see that
$0\in conv(D)$, so that for some constants $\lambda_e>0$, $e\in D$, we have $\sum_{e\in D}\lambda_e e=0$.  Moreover,  
the coefficients $\lambda_e$ can always be chosen as integer numbers, as all $e\in U$ have integer coordinates.
Therefore, the random walk returns
to the origin after $N=\sum_{e\in D}\lambda_e$ steps
with a probability larger than $\kappa^N$, so that
$p_N(0)\ge\kappa^N$. On the other hand, the fact
that $\left|conv(D)\right|>0$ implies that $|conv(V_N)|>0$,
c.f. \eqref{ven}, is also bounded by some positive
constant so that $\varepsilon_N$, c.f. (\ref{varen}), is bounded from below
by some positive constant.
\end{rmk}

\subsection{Continuous time RWRE}
We can also formulate a continuous time version of Theorem \ref{qclt}.
Recall that $U$ is a finite subset of $\mathbb Z^d$. 
Define 
\[
\mathcal Q:=\big\{v=\{v(e)>0:e\in U\}\big\}.
\]
Note that we do not assume any upper bound on the transition rates $v(\cdot)\in \mathcal Q$. We call $D([0,\infty);\mathcal {\mathfrak H})$,
where  ${\mathfrak H}:=\mathcal Q^{\mathbb Z^d}$,
the {\it environmental space} while an element
\[
\omega:=\{\omega_t:t\ge 0\}
 \in D([0,\infty);\mathcal {\mathfrak H})
 \]
a {\it continuous time environment},
 so that $\omega_t:=
\{\omega_t(x):x\in\mathbb Z^d\}$ with $\omega_t(x):=\{\omega_t(x,e):e\in U\}
\in\mathcal Q$. 
Let us denote by $\mathbb Q$ the law of the continuous time environment $\omega$. 
Given an environment $\omega$, for $u:\mathbb Z^d\times [0,\infty)\to
\mathbb R$ bounded and differentiable in time for each $x\in\mathbb Z^d$, we define the parabolic difference operator 
\[
\mathcal L_\o u(x,t):=\sum_{e\in U}\o_t(x,e)[u(x+e,t)-u(x,t)]+\partial_t u(x,t).
\]
Let $(X_t,t)_{t\ge 0}$  be the Markov process on $\Z^d\times[0,\infty)$ with generator $\mathcal L_\o$ and initial state $(0,0)$.
We call $(X_t)_{t\ge 0}$ a {\it continuous time random walk in 
the time-dependent environment $\o$} 
 and denote for each
$x\in\mathbb Z^d$ by $P^c_{x,t,\omega}$ the law on
$D([0,\infty);\mathbb Z^d)$ of this random walk starting from $x$
at time $t$.
We call $P^c_{x,t,\omega}$ the {\it quenched law}  of the
random walk. 

For each $s\ge 0$ and $x\in\mathbb Z^d$
define the transformation 
\[
\theta_{s,x}:D([0,\infty);  \mathfrak H)
\to D([0,\infty); \mathfrak H)
\] 
by $(\theta_{s,x}\omega)_t(y):=\omega_{t+s}(x+y)$. We assume that the law of the environment
$\mathbb Q$ is stationary under the action of the shifts
 $\{\theta_{s,x}:s\ge 0,x\in \mathbb Z^d\}$.

We say that the  random environment $\omega$ with law $\mathbb Q$ is {\it balanced}
if for every $t\ge 0$, $x\in\mathbb Z^d$,
\begin{equation}
\label{bcte}
\sum_{e\in U}e\omega_t(x,e)=0
\qquad\mathbb Q-a.s.
\end{equation}
As in the discrete-time case,  we can also define the {\it environmental process}
as the continuous time Markov process
\[
\bar\omega_t:=\theta_{t,X_t}\omega
\]
for $t\ge 0$.
Here the process $(X_t)_{t\ge 0}$ is sampled according to $P^c_{0,0,\o}$.
 In general, if $\omega$ is distributed according to some 
law $\mu$, we define  $P^c_\mu:=\int P^c_{0,0,\omega} d\mu$. We will say
that $\mu$ is an invariant distribution for the environmental
process if the law of $\bar\omega_t$ under $P^c_\mu$ is independent
of $t$ for $t\ge 0$.

For each $(x,t)\in\mathbb Z^d\times[0,\infty)$, let
\begin{equation}
\label{convexu}
U_{x,t}=\{\omega_t(x,e)e:e\in U\}
\end{equation}
 and
\begin{align}\label{epsilon-cont}
&\varepsilon(x,t)=\varepsilon_\omega(x,t):=|conv(U_{x,t})|^{1/(d+1)},\\
&\upsilon(x,t)=\upsilon_\omega(x,t):=\sum_{e\in U}\omega_t(x,e).\nonumber
\end{align}
We will denote by $\mathbb E_{\mathbb Q}$ the expectation with respect
to the law $\mathbb Q$ of the environment and write $\varepsilon=
\varepsilon(0,0)$ and $\upsilon=\upsilon(0,0)$.

\begin{theorem}
\label{qclt-continuous} Consider a continuous time random walk in an elliptic
time-dependent balanced random environment with  law $\mathbb Q$. Suppose that the
family of shifts $\{\theta_{s,x}:s>0,x\in U\}$
is ergodic. Assume that

\begin{equation}\label{ellipt-cond-cont}
\mb E_{\mathbb Q}\left[\frac{\upsilon^{d+1}+1}{\varepsilon^{d+1}}\right]
<\infty,
\end{equation}
Then, the following are satisfied.

\begin{itemize}

\item[(i)] The environmental process has a  unique invariant distribution
which is absolutely continuous with respect to $\mathbb Q$.

\item[(ii)] $\mathbb Q$-a.s. under $P^c_{0,0,\omega}$
the sequence $X_{t\cdot}/\sqrt{t}$ converges, as $t\to\infty$,
in law on the Skorokhod space $D([0,\infty);\mathbb R^d)$ to a Brownian motion with a deterministic non-degenerate  covariance matrix.

\end{itemize}
\end{theorem}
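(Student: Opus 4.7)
The plan is to mirror the proof strategy of Theorem~\ref{qclt} in the continuous-time setting, resting on three pillars: a parabolic Alexandrov--Bakelman--Pucci (ABP) type maximum principle for $\mathcal L_\omega$, construction of an absolutely continuous invariant measure for the environmental process $\bar\omega_t$, and a martingale functional CLT exploiting the balanced structure.

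For part (i), the first step is to prove an ABP-type maximum principle for the operator $\mathcal L_\omega$: for bounded $u\geq 0$ on a parabolic cylinder satisfying $\mathcal L_\omega u \geq -f$, one should obtain a pointwise bound on $u(0,0)$ in terms of a weighted discrete $L^{d+1}$-norm of $f$ involving the volumes $|conv(U_{x,t})|$ (equivalently $\varepsilon(x,t)^{d+1}$) and the total rate $\upsilon(x,t)$; the $\upsilon$ factor is forced by the lack of an a priori upper bound on the transition rates. Combining this with duality yields an annealed bound of the form $\frac{1}{T}\int_0^T\mathbb{E}_{\mathbb{Q}}[g(\bar\omega_s)]\,ds \le C\,\mathbb{E}_{\mathbb{Q}}[\,(\upsilon^{d+1}+1)\varepsilon^{-(d+1)}g^{(d+1)/d}\,]^{d/(d+1)}$ for nonnegative bounded $g$, with the right-hand side finite by \eqref{ellipt-cond-cont}. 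A Krylov--Bogolyubov argument on the Cesàro averages then produces an invariant measure $\nu$ for $\bar\omega_\cdot$, and the above estimate forces $\nu \ll \mathbb Q$ with density in $L^{(d+1)/d}(\mathbb{Q})$. Uniqueness follows from the ergodicity of $\{\theta_{s,x}:s>0,\,x\in U\}$: any two such absolutely continuous invariant measures share an ergodic decomposition forced by the shift-invariant structure, and must coincide.

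For part (ii), the balanced condition \eqref{bcte} means $X_t$ is a $P^c_{0,0,\omega}$-martingale, whose quadratic variation is the additive functional
\[
\langle X^i,X^j\rangle_t=\int_0^t\sum_{e\in U}(e\cdot e_i)(e\cdot e_j)\,\omega_s(X_s,e)\,ds = \int_0^t F_{i,j}(\bar\omega_s)\,ds,
\]
where $F_{i,j}(\omega)=\sum_e (e\cdot e_i)(e\cdot e_j)\omega_0(0,e)$. Since $\nu \ll \mathbb Q$ and $\mathbb Q$ is ergodic for $\{\theta_{s,x}\}$, the process $(\bar\omega_t)$ under $P^c_\nu$ is stationary and ergodic, so by Birkhoff's theorem $t^{-1}\langle X\rangle_t\to A$ with $a_{i,j}=\int F_{i,j}\,d\nu$, first $\nu$-a.s.\ and then $\mathbb Q$-a.s.\ by absolute continuity. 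Tightness of $\{X_{t\cdot}/\sqrt t\}$ on Skorokhod space follows from the martingale maximal inequality applied to this quadratic variation control, and combined with convergence of the bracket the functional martingale CLT delivers the Brownian limit. Non-degeneracy of $A$ is immediate: for $u\in\mathbb R^d\setminus\{0\}$, $u\cdot Au=\int\sum_e (e\cdot u)^2\omega_0(0,e)\,d\nu>0$ by ellipticity of $U$ together with $\nu\ll\mathbb Q$.

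The main obstacle is the parabolic ABP estimate in the present unbounded-rate setting: unlike in the discrete-time case where $\upsilon\equiv 1$, here trajectories may make arbitrarily many jumps in short intervals, so the proof must carefully track powers of $\upsilon$ in the comparison with the upper envelope of $u$; this is exactly why the moment condition in \eqref{ellipt-cond-cont} has the factor $\upsilon^{d+1}$ rather than just $\varepsilon^{-(d+1)}$. Passing from the ABP bound to the absolute continuity and weak-$L^{(d+1)/d}$ regularity of the limiting density, while keeping uniformity in the time horizon $T$, is the delicate technical step; once it is in place, the remainder of the argument is analogous to the discrete-time proof.
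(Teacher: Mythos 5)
Your overall architecture (parabolic ABP estimate, construction of an absolutely continuous invariant measure, martingale FCLT using the balance condition) matches the paper's, but you are missing the single device that makes the continuous-time case tractable: the \emph{time change} that slows down the walk. The paper first considers the process $(Y_t,T_t)$ with generator $(\upsilon_\omega+1)^{-1}\mathcal L_\omega$, so that (i) the exit-time estimate analogous to Lemma~\ref{L-one} holds: with the slowed generator, $|Y_t|^2-C_U t$ is a genuine supermartingale because $(\upsilon+1)^{-1}\sum_e\omega_t(y,e)|e|^2\le C_U$, and Doob's inequality gives a clean bound on $\tau_1$; without the time change you only get that $|X_t|^2-C_U\int_0^t\upsilon(X_s,s)\,ds$ is a supermartingale, which is useless when $\upsilon$ is unbounded, so your exit-time control and the resulting resolvent bound do not go through. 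Also (ii) non-explosiveness of $X_t$ has to be \emph{proved}, not assumed; the paper derives it from $T_t/t\to E_{\bar\nu}[(\upsilon+1)^{-1}]\in(0,1)$ and the identity $X_{T_t}\stackrel{d}{=}Y_t$, a step absent from your argument. And (iii) the factor $(\upsilon+1)^{d+1}$ in \eqref{ellipt-cond-cont} does not come from ``tracking powers of $\upsilon$ in the proof of the maximum principle'' — the paper's parabolic ABP (Theorem~\ref{max-cont}) has no $\upsilon$ in it at all; rather, writing the resolvent equation for the slowed chain as $(\upsilon+1)^{-1}\mathcal L_\xi u=-g$, i.e.\ $\mathcal L_\xi u=-(\upsilon+1)g$, and applying the plain maximum principle produces $\|(\upsilon+1)g/\varepsilon\|_{d+1}$ automatically. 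Without this you would be attempting to prove a weighted ABP estimate from scratch, and you flag yourself that you do not know how to close that step.

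There is a further subtlety your proposal glosses over: the invariant measure $\bar\nu$ produced by this scheme is the invariant measure of the environmental process for the \emph{slowed} walk $(\theta_{T_t,Y_t}\omega)_{t\ge0}$, not of $(\bar\omega_t)_{t\ge0}$; one then recovers $\nu$ for the original process by a density change $d\nu=\frac{N}{\upsilon+1}\,d\bar\nu$. Your Krylov--Bogolyubov/Ces\`aro-average approach to the original process could in principle be made to work, but it requires a priori control of the (unbounded-rate) semigroup that you have not supplied; the paper instead uses periodization and Prohorov tightness on a compact state space for the slowed chain. Part (ii) of your proposal, once an invariant measure and non-explosiveness are in hand, matches the paper's argument.
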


\subsection{Applications of the QCLTs}

Theorem~\ref{qclt-continuous}  can be
applied to derive quenched central limit theorems
for balanced environments driven by
some interacting particle systems.
 An example of this situation is a random
walk moving among the zero-range process. 
Given a function $g:\mathbb N\to\mathbb [0,\infty)$
satisfying 
 $g(k)>g(0)=0$ for
all $k>0$, the zero-range process
can be constructed as a
Markov process describing the movement of particles
on the lattice $\mathbb Z^d$, so that
if at a site $x\in\mathbb Z^d$ and time
$t\ge 0$ there are $\eta_t(x)$ particles,
a particle jumps uniformly to the nearest
neighboring sites of $x$ at a rate $g(\eta_t(x))$.
The infinitesimal generator $L$ of this interacting
particle system is defined by its action
on functions $f:\mathbb N^{\mathbb Z^d}\to\mathbb R$
depending on a finite number of coordinates
of $\eta=\{\eta(x):x\in\mathbb Z^d\}\in\mathbb N^{\mathbb Z^d}$
by
\[
Lf(\eta)=\sum_{x,y\in\mathbb Z^d:|x-y|_1=1}g(\eta(x))(f(\eta^{x,y})-f(\eta)),
\]
where
 \[
\eta^{x,y}(z):=\eta(z)-1_{z=x}+1_{z=y}.
\]
Under the condition
\[
\sup_{k\in\mathbb N}|g(k+1)-g(k)|<\infty,
\]
this process is well defined whenever the
initial condition $\eta\in S$, where
\[
S:=\{\eta\in{\mathbb N}^{\mathbb Z^d}:\sum_{x\in\mathbb Z^d}\eta(x)\alpha(x)<\infty\},
\]
and
\[
\alpha(x):=\sum_{n=0}^\infty\frac{1}{2^n}p_n(0,x),
\]
with $p_n(0,x)$ the probability that a discrete time simple symmetric
random walk starting from $0$ is at position $x$ at time $n$
(see \cite{A-82} for this construction).
The above process is called {\it zero-range process},
and we will denote by $P_\eta$ its law
on $D([0,\infty);S)$ starting from $\eta\in S$. This process
has a family of invariant measures (see also
\cite{A-82}) defined through
the partition function $Z:[0,\infty)\to [0,\infty)$ by
\[
Z(\alpha)=\sum_{k\ge 0}\frac{\alpha^k}{g(1)\cdots g(k)}.
\]
Define
\[
\alpha^*:=\sup\{\alpha\in [0,\infty):Z(\alpha)<\infty\}.
\]
Assume also that
\[
\lim_{\alpha\to\alpha^*}Z(\alpha)=\infty.
\]
Now, for each $0\le\alpha<\alpha^*$ define the product
probability measure $\mu_\alpha$ on the Borel $\sigma$-algebra of $\mathbb N^{\mathbb Z^d}$, with marginals given by
\[
\mu_\alpha(k)=\frac{1}{Z(\alpha)}\frac{\alpha^k}{g(1)\cdots g(k)}.
\]
As a matter of fact $\mu_\alpha(S)=1$, so that we can define
a probability measure
\[
P_{\alpha}:=\int P_\eta d\mu_\alpha(\eta).
\]
Let us assume that the function $g$ is non-decreasing, 
 so that for each $0\le\alpha<\alpha^*$,
the invariant measure $\mu_\alpha$ is also extremal
\cite{A-82}. Therefore, for
 $\alpha\in [0,\alpha^*)$,
 the
shifts $\{\theta_{s,x}:s>0,x\in\mathbb Z^d\}$ form an
ergodic family of transformations for $P_{\alpha}$.
For each $e\in \{e_1,\ldots, e_d\}$, fix a finite range 
function $u(e,\cdot):S\to\mathcal (0,\infty)$:
in other words, there is an $R>0$ such that for
each $\eta\in S$, $u(e,\eta)$ depends only
on $\eta(x)$ for $|x|_1\le R$. Define
for $e\in\{e_1,\ldots,e_d\}$, 
\[
u(-e,\cdot):=u(e,\cdot)
\]
Now, define the
stochastic process for $t\ge 0$,
 $\omega_t:=\{\omega_t(x):x\in\mathbb Z^d\}$
where for $x\in\mathbb Z^d$, we define
\[
\omega_t(x):=\{ u(e,\theta_x\eta_t):e\ \rm{such}\ \rm{that}\ |e|_1=1\}.
\]
Let us call $\omega:=\{\omega_t:t\ge 0\}$ a {\it an
environment generated by an attractive zero-range process}.
We then have the following immediate corollary to Theorem~\ref{qclt-continuous}.

\begin{corollary}
\label{coro1} Consider a continuous time random walk $\{X_t:t\ge 0\}$ in
an environment $\omega$ generated by an attractive zero-range
process, with law $P_\alpha$ for some $\alpha\in [0,\alpha^*)$.
Assume that

\begin{equation}
\label{cond-coro}
\int\frac{\left(\sum_{e\in U} u( e,\eta)\right)^{d+1}+1}
{\prod_{i=1}^du(e_i,\eta)}
d\mu_\alpha<\infty.
\end{equation}
Then $P_\alpha$-a.s. the random walk $\{X_t:t\ge 0\}$ satisfies
the quenched central limit theorem with nondegenerate covariance
matrix.
\end{corollary}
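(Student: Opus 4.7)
The plan is to verify the hypotheses of Theorem~\ref{qclt-continuous} for the environment $\omega=\{\omega_t\}_{t\ge 0}$ generated by the zero-range process, with $\mathbb{Q}$ defined as the push-forward of $P_\alpha$ under $\eta\mapsto\omega$. The jump range here is $U=\{\pm e_1,\ldots,\pm e_d\}$, which plainly spans $\mathbb{R}^d$ (ellipticity).

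First, I check the balanced condition \eqref{bcte}: since $u(-e_i,\cdot):=u(e_i,\cdot)$ by definition, one has $\omega_t(x,-e_i)=\omega_t(x,e_i)$ and the sum $\sum_{e\in U}e\,\omega_t(x,e)$ telescopes to zero pointwise. Next, stationarity of $\mathbb{Q}$ under $\{\theta_{s,x}: s\ge 0,x\in\mathbb{Z}^d\}$ follows from the time-stationarity of the zero-range process under $P_\alpha$ (since $\mu_\alpha$ is invariant) together with the spatial product structure of $\mu_\alpha$ and the finite-range definition of $u(e,\cdot)$.

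For the ergodicity of the family $\{\theta_{s,x}:s>0,x\in U\}$: because $g$ is non-decreasing and $\mu_\alpha$ is the extremal invariant measure cited from \cite{A-82}, the space-time shifts $\{\theta_{s,x}:s>0,x\in\mathbb{Z}^d\}$ form an ergodic family for $\mathbb{Q}$. Since a set $A$ satisfying $\theta_{s,x}^{-1}A=A$ for all $(s,x)$ with $s>0$ and $x\in U$ is automatically invariant under all compositions $\theta_{s_1,x_1}\circ\cdots\circ\theta_{s_k,x_k}=\theta_{s_1+\cdots+s_k,\,x_1+\cdots+x_k}$, and $U\supset\{\pm e_i\}$ generates $\mathbb{Z}^d$ as an abelian monoid under addition (using both signs of each $e_i$) while $\{s>0\}$ generates $(0,\infty)$, the invariant $\sigma$-algebra for the smaller family coincides with that of the full space-time shift family, hence is $\mathbb{Q}$-trivial.

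The main computation is verifying \eqref{ellipt-cond-cont}. At $(x,t)=(0,0)$, the set $U_{0,0}=\{\pm u(e_i,\eta_0)e_i: i=1,\ldots,d\}$ is a symmetric cross-polytope with semi-axes $u(e_i,\eta_0)$ along the coordinate directions, whose Lebesgue measure is $\tfrac{2^d}{d!}\prod_{i=1}^d u(e_i,\eta_0)$. Hence
\[
\varepsilon(0,0)^{d+1}=\frac{2^d}{d!}\prod_{i=1}^d u(e_i,\eta_0),\qquad \upsilon(0,0)=\sum_{e\in U}u(e,\eta_0).
\]
By stationarity of $\mathbb{Q}$, $\mathbb{E}_\mathbb{Q}[(\upsilon^{d+1}+1)/\varepsilon^{d+1}]$ equals
\[
\frac{d!}{2^d}\int\frac{\bigl(\sum_{e\in U}u(e,\eta)\bigr)^{d+1}+1}{\prod_{i=1}^d u(e_i,\eta)}\,d\mu_\alpha(\eta),
\]
which is finite by assumption \eqref{cond-coro}. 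The conclusion then follows directly from Theorem~\ref{qclt-continuous}(ii).

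The only step with any subtlety is the ergodicity reduction, since the family $\{\theta_{s,x}:s>0,x\in U\}$ is a priori smaller than the full space-time shift family for which extremality of $\mu_\alpha$ is invoked; this is however resolved by the elementary monoid-generation argument above. All remaining steps are direct bookkeeping once the cross-polytope volume is written down.
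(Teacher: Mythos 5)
Your proof is correct and carries out exactly the verification that the paper elides by calling Corollary~\ref{coro1} an ``immediate'' consequence of Theorem~\ref{qclt-continuous}: you check ellipticity, the balanced condition via $u(-e_i,\cdot)=u(e_i,\cdot)$, stationarity of $\mathbb{Q}$, the ergodicity of $\{\theta_{s,x}:s>0,x\in U\}$ by the monoid-generation reduction from $\{\theta_{s,x}:s>0,x\in\Z^d\}$, and the moment condition \eqref{ellipt-cond-cont} via the cross-polytope volume identity $|conv(U_{0,0})|=\frac{2^d}{d!}\prod_i u(e_i,\eta_0)$, which identifies $\mathbb{E}_\mathbb{Q}[(\upsilon^{d+1}+1)/\varepsilon^{d+1}]$ with a constant multiple of the integral in \eqref{cond-coro}. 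All steps are sound and in line with the paper's intent.
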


 The condition \eqref{cond-coro} is satisfied
when the function $u$ is bounded from both above and below.
Furthermore, Corollary~\ref{coro1} includes the case of a second
class particle in the zero-range process, solved by Saada in \cite{Sa-90},
where nevertheless the main problem we face here, which
is the construction of the invariant measure, is
not present.

 Theorem~\ref{qclt} can be applied to derive QCLTs for a certain class of random walks in static random
environment. In order to give a simple example, we will consider a random walk on $\mathbb Z^{d_1+d_2}$, $d_1,d_2\in\N$. For
$x\in\mathbb Z^{d_1+d_2}$, we write 
$x=(x^{(1)},x^{(2)})$ so that $x^{(1)}\in\Z^{d_1}$ and $x^{(2)}\in\Z^{d_2}$.
We say that a static environment $\omega\in\mathcal P^{\Z^{d_1+d_2}}$  
is {\it autonomous in the first coordinates} if
$P_{x,\omega}(X_1^{(1)}=e^{(1)})$ depends only on the first $d_1$ coordinates of $x$. That is, for $x,z\in\Z^{d_1+d_2}$,
\[
P_{x,\omega}(X_1^{(1)}=e^{(1)})=P_{z,\omega}(X_1^{(1)}=e^{(1)}) \quad\text{ if $x^{(1)}=z^{(1)}$.}
\]
In other words, $X_n^{(1)}$ is a Markov chain under $P_{0,\omega}$.

\begin{corollary}
\label{qclt-static}
Consider a random walk $X_n=(X_n^{(1)}, X_n^{(2)})$ on $\mathbb Z^{d_1+d_2}$
in a static random environment  with jump range $U=\{e\in\Z^{d_1+d_2}: |e|\le 1\}$.
 Assume that $\P$ is stationary under $\{\theta_x: x\in\Z^{d_1+d_2}\}$ and
for all $i=1,\ldots, d_1$, it is ergodic under the shifts 
$\{\theta_{\pm e_i}\}$. Suppose the following are satisfied.

\begin{enumerate}[(a)]

\item {\bf (autonomous first coordinates)} 
$\P$-almost surely, the environment is autonomous in the first coordinates.

\item {\bf (equivalent ergodic invariant
measure for the first coordinates)} There is an invariant
measure $\nu$ which is equivalent to $\P$ such that the
environmental process $(\theta_{(X_n^{(1)},0)}\omega)_{n\in\N}$ is an ergodic sequence under $\nu\times P_{0,\omega}$.

\item {\bf (QCLT for the first coordinates of the random walk)} There exists
a deterministic vector $v_1\in\Z^{d_1}$ such that 
$\P$-a.s. under $P_{0,\omega}$, the sequence $(X_{[n\cdot ]}^{(1)}-v_1n\cdot)/\sqrt{n}$ converges in law to a Brownian motion with a deterministic non-degenerate covariance matrix.

\item {\bf (balanced and uniformly-elliptic in the second coordinates)} 
$\P$-a.s., $\omega$ is uniformly-elliptic and balanced in $\{\pm e_i: i=d_1+1,\ldots, d_1+d_2\}$.

\end{enumerate}
Then 
$\P$-a.s. under $P_{0,\omega}$, the sequence $(X_{[n\cdot ]}-nv)/\sqrt{n}$ converges in law to a Brownian motion with a deterministic non-degenerate covariance matrix,  where $v:=(v_1,0)\in\Z^{d_1+d_2}$.
\end{corollary}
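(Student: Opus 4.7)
The plan is to apply Theorem~\ref{qclt} to a subordinated version of the second-coordinate walk $X^{(2)}$ viewed in an induced time-dependent environment, then combine with~(c). Since $U$ is nearest-neighbor, at most one coordinate of $X_n$ changes per step; let $\alpha(x^{(1)}) := P_{x,\omega}(X_1^{(1)} = x^{(1)})$, which by~(a) depends only on $x^{(1)}$ and by~(d) satisfies $\alpha \ge 2d_2\kappa > 0$ where $\kappa$ is the ellipticity constant. Define the stopping times $T_0 := 0$, $T_{m+1} := \inf\{n > T_m : X_{n+1}^{(1)} = X_n^{(1)}\}$, and set $\hat{X}^{(2)}_m := X^{(2)}_{T_m}$. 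Given $\omega$ and $(X^{(1)}_{T_\cdot})$, $\hat{X}^{(2)}$ is a random walk on $\mathbb{Z}^{d_2}$ with jump range $U^{(2)} := \{e^{(2)} : (0, e^{(2)}) \in U\}$ in the time-dependent environment
\[
\hat{\omega}_m(y, e^{(2)}) := \frac{\omega\bigl((X^{(1)}_{T_m}, y), (0, e^{(2)})\bigr)}{\alpha(X^{(1)}_{T_m})},
\]
which by~(d) is balanced and uniformly elliptic with constant $\kappa$; in particular \eqref{ellipt-cond} holds.

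For the ergodicity hypothesis of Theorem~\ref{qclt}: by~(b), the environmental process $(\theta_{(X_n^{(1)}, 0)}\omega)_n$ is stationary and ergodic under $\nu \times P_{0,\omega}$. Since $\alpha \ge 2d_2\kappa$, the subordination gaps $T_{m+1} - T_m$ have finite expectation, and a standard induced-Markov-chain argument shows $(\theta_{(X^{(1)}_{T_m}, 0)}\omega)_m$ is stationary and ergodic. Combined with the spatial stationarity of $\mathbb{P}$ (equivalently $\nu$, since $\nu \sim \mathbb{P}$) under $\theta_{(0, e^{(2)})}$, the law of $\hat{\omega}$ is stationary under all shifts $\theta_{n, y}$ with $n\ge 0$, $y \in \mathbb{Z}^{d_2}$, and ergodic under the pure time-shift $\theta_{1, 0}$---hence \emph{a fortiori} ergodic under $\{\theta_{1, e^{(2)}} : e^{(2)} \in U^{(2)}\}$. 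Theorem~\ref{qclt} then gives, for $\mathbb{P}$-a.e.\ $\omega$ and $P_{0,\omega}$-a.s.\ $X^{(1)}$, that $\hat{X}^{(2)}_{[m\cdot]}/\sqrt{m}$ converges weakly to a Brownian motion $\hat{B}^{(2)}$ with deterministic non-degenerate covariance $\hat{A}$.

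To un-subordinate, let $N_n := \max\{m : T_m \le n\}$, so $X^{(2)}_n = \hat{X}^{(2)}_{N_n}$; Birkhoff's theorem gives $N_n/n \to \bar\alpha := \int \alpha\, d\nu > 0$, and a standard time-change argument yields $X^{(2)}_{[n\cdot]}/\sqrt{n}$ converging to a Brownian motion $B^{(2)}$ with covariance $\bar\alpha\hat{A}$. Hypothesis~(c) gives $(X^{(1)}_{[n\cdot]} - v_1 n\cdot)/\sqrt{n}$ converging to a Brownian motion $B^{(1)}$. For the joint functional CLT, note the per-step cross-covariance between $X^{(1)}$- and $X^{(2)}$-increments vanishes: they cannot both be nonzero (nearest-neighbor) and the $X^{(2)}$-increment has zero mean under $\nu$ (balanced in second coordinates). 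Conditioning on $X^{(1)}$ and using the conditional QCLT for $X^{(2)}$, together with the marginal CLT for $X^{(1)}$ from~(c), yields joint convergence of $(X_{[n\cdot]} - n\cdot v)/\sqrt{n}$ to $(B^{(1)}, B^{(2)})$ with independent components; the joint covariance matrix is block-diagonal and non-degenerate.

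The main technical obstacle is the rigorous setup of the probability space on which $\hat{\omega}$ lives (as a joint object in $(\omega, X^{(1)})$) and the verification that the ergodicity from~(b) descends to the induced sub-sampled environment of Theorem~\ref{qclt}. The final joint functional CLT is standard once the conditional CLT for $X^{(2)}$ is in hand, modulo careful handling of the drift $v_1$ and the random time-change $N_n$.
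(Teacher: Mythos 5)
Your overall strategy --- subordinate the second-coordinate walk to the times when the first coordinate stays put, apply Theorem~\ref{qclt} to the resulting induced time-dependent environment, then un-subordinate via Kac/Birkhoff and combine with~(c) through conditioning on $\mathcal F^Y$ --- is exactly the route the paper takes (Lemma~\ref{2nd coord} and Steps~1--6 of the proof). The main substantive gap is your justification of the spatial stationarity of $\nu$: the parenthetical ``(equivalently $\nu$, since $\nu \sim \mathbb{P}$)'' is a non-sequitur, because equivalence of measures does not transfer stationarity. You genuinely need $\nu \circ \theta_{(0,z)}^{-1} = \nu$ for $z\in\Z^{d_2}$ in order for the law of $\hat\omega$ to be stationary under $\theta_{0,z}$, and the correct argument (the paper's Step~2) is: by condition~(a) the law of $X^{(1)}_\cdot$ under $P_{x,\omega}$ depends only on $x^{(1)}$, so $\nu\circ\theta_{(0,z)}^{-1}$ is again an invariant measure for the chain $(\theta_{(X_n^{(1)},0)}\omega)_n$, and by spatial stationarity of $\P$ it remains equivalent to $\P$; uniqueness of $\nu$ (which follows from the ergodicity in~(b)) then forces $\nu\circ\theta_{(0,z)}^{-1}=\nu$.

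A smaller point worth being explicit about: for the induced-shift argument to give ergodicity of the subsampled sequence in a form usable by Theorem~\ref{qclt}, one must recognize the return times as return times of the \emph{environment} sequence. The paper does this in Step~3 by first discarding the degenerate case and showing $\P(\theta_{e_i}\omega=\omega)=0$, from which $\{X^{(1)}_{n+1}=X^{(1)}_n\}$ and $\{\tilde\omega_{n+1}=\tilde\omega_n\}$ coincide a.s., so $\phi_D$ is a function of $(\tilde\omega_n)_n$ and Sarig's theorem on induced transformations applies. Your framing in the joint space $(\omega,X^{(1)}_\cdot)$ can be made to work as well (the joint process is a stationary ergodic skew-product over $(\tilde\omega_n)_n$), but ``standard induced-Markov-chain argument'' is doing nontrivial work there and the degeneracy caveat should still be addressed. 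Aside from these two points your construction of $\hat\omega$, the verification of the hypotheses of Theorem~\ref{qclt}, the time-change with covariance $\bar\alpha\hat A$, and the joint functional CLT match the paper's argument.
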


\begin{rmk}
As an application of Corollary~\ref{qclt-static}, we will obtain QCLT for an environment which is ``ballistic and autonomous in the first $d_1$ directions and balanced in the other $d_2$ coordinates".

We let $\alpha\in\mathcal P^{\Z^{d_1}}$ and $\beta\in\mathcal P^{\Z^{d_1+d_2}}$ be static random environments which are independent under their joint law. We construct $\alpha, \beta$ such that
\begin{enumerate}[(i)]
\item 
$\alpha$ is an iid uniformly-elliptic environment on $\Z^{d_1}$, $d_1\ge 4$, with jump range $\{y\in\Z^{d_1}:|y|\le 1\}$ and it satisfies the ballisticity condition $(\mathscr P)$, c.f. \cite[Theorem~1.10]{BCR-14}.
\item 
$\beta$ is a stationary (under the shifts $\{\theta_x: x\in\Z^{d_1+d_2}\}$) balanced environment on $\Z^{d_1+d_2}$ which is uniformly-elliptic on  its jump range $\{x\in \Z^{d_1+d_2}: x^{(1)}=0, |x^{(2)}|\le 1\}$.
\end{enumerate}
Define the environment $\omega\in\mathcal P^{\Z^{d_1+d_2}}$ such that for $x,e\in\Z^{d_1+d_2}$,
\[
\omega(x,e)=
\left\{
\begin{array}{rl}
\alpha(x^{(1)}, e^{(1)}) &\text{ if }e=\pm e_i,\, i=1,\ldots,d_1\\
\alpha(x^{(1)},0)\beta(x,e) &\text{ if }e=0\text{ or }e=\pm e_{d_1+j},\, j=1,\ldots d_2.
\end{array}
\right.
\]
Then the law of $\omega$ satisfies all conditions of Corollary~\ref{qclt-static} and the QCLT holds.
\end{rmk}

\begin{rmk}
One may replace $\alpha$ in the above example with any environment that satisfies conditions (b) and (c) of Corollary~\ref{qclt-static}. 
For instance, let $\xi(y,e)$ be the transition probability of an iid conductance model, we may take a constant $r\in(0,1)$ and let $\alpha(x,e):=(1-r)\xi(x,e)+r\mathbb 1_{e=0}$. Clearly, the presence of $r$ is to guarantee that $\alpha(x,0)>0$.
\end{rmk}

\begin{rmk}
Recently Baur \cite{Ba-14} proved a QCLT with similar flavor for iid static environment on $\Z^{d}, d\ge 3$, where the environment is a small perturbation of the simple random walk and is balanced in a fixed coordinate direction. The law of the environment is also assumed to be invariant under antipodal reflections.
\end{rmk}

\subsection{Organization of the article}

 Since the proofs of Theorem~\ref{qclt} and \ref{qclt-continuous}
are similar, most of the subsequent sections of this paper
will give the details of the proof of the discrete time
Theorem~\ref{qclt}, while an outline of the proof of
the continuous time Theorem~\ref{qclt-continuous}
is given in section~\ref{section-continuous}.  In section~\ref{section-kozlov}, we state the version of Kozlov's theorem
that will be used to construct the absolutely continuous invariant
measure for the discrete time random walk. In subsection~\ref{section-parabolic},
the parabolic maximum principle for general meshes is stated
while its proof is deferred to subsection~\ref{proof-maximal}. Both Kozlov's theorem and the parabolic maximum principle
are subsequently used in section~\ref{proof-main} to prove
Theorem~\ref{qclt}. Corollary~\ref{qclt-static} is proved
in section~\ref{section-application}. In section~\ref{section-counterexample},
we give an example that the ergodicity hypothesis of Theorem~\ref{qclt}
cannot be weakened by enlarging the ergodic family of transformations.

\section{Two preliminary tools}
Here we state two theorems that will be used to prove
Theorem~\ref{qclt}. The first is a version of a
well known theorem of Kozlov  for time dependent random walks, while the second is
the parabolic maximum principle for general meshes,
whose proof is given in Section~\ref{proof-maximal}. The parabolic
maximum principle is a crucial tool to construct
the absolutely continuous invariant measure of part $(i)$
of Theorem~\ref{qclt}, while Kozlov's theorem is required
to derive part $(ii)$ of the same theorem.

\subsection{Kozlov's theorem}
\label{section-kozlov}
The proof of {Theorem~\ref{qclt}} will require
the following version of Kozlov's theorem \cite{Ko-85}
for time dependent random walks.

\begin{theorem}
\label{kozlov} Consider a
random walk in
a time-dependent  elliptic  random environment 
which has a law $\mathbb P$. Assume
that $\{\theta_{1,z}:z\in  U\}$ is an ergodic
family of transformations with respect to $\mathbb P$.
 Assume that there exists an invariant measure $\nu$
for the environmental process, which is absolutely
continuous with respect to $\mathbb P$.
Then, the following are satisfied:

\begin{itemize}

\item[(i)] $\nu$ is equivalent to $\mathbb P$.

\item[(ii)] The environment as seen from the random walk
with initial law $\nu$ is ergodic.

\item[(iii)] $\nu$ is the unique probability measure for the environmental
process which is absolutely continuous with respect to $\mathbb P$.

\end{itemize}
\end{theorem}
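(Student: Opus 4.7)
The plan is to run the classical Kozlov argument adapted to the forward space--time shift structure. Throughout I will write $T$ for the transition operator of the environmental process, $(Tg)(\omega):=\sum_{e\in U}\omega_0(0,e)g(\theta_{1,e}\omega)$, so that invariance of $\nu$ reads $\int Tg\,d\nu=\int g\,d\nu$ for every bounded measurable $g$. A structural fact I will use repeatedly is that, by the very definition $\mathcal P(U)=\{v:v(e)>0\ \forall e\in U,\ \sum_e v(e)=1\}$, every transition weight $\omega_0(0,e)$ is strictly positive $\mathbb P$-almost surely.

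For part (i), I would set $f:=d\nu/d\mathbb P$ and test the invariance identity against $g=\mathbf 1_{\{f=0\}}$:
\[
0=\int\mathbf 1_{\{f=0\}}f\,d\mathbb P=\int\sum_{e\in U}\omega_0(0,e)\mathbf 1_{\{f=0\}}(\theta_{1,e}\omega)f(\omega)\,d\mathbb P.
\]
Strict positivity of the weights forces $\{f>0\}\subset\theta_{1,e}^{-1}\{f>0\}$ modulo $\mathbb P$-null sets, for every $e\in U$, and stationarity of $\mathbb P$ promotes each inclusion to equality (the two sides have the same $\mathbb P$-measure). The ergodicity of $\{\theta_{1,e}:e\in U\}$ then forces $\mathbb P(\{f>0\})\in\{0,1\}$, and since $\nu$ is a probability measure this value must be $1$, so $\nu\sim\mathbb P$.

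For part (ii), ergodicity of the stationary chain $(\bar\omega_n)$ under $P_\nu$ amounts to showing that every bounded measurable $g$ with $Tg=g$ is $\nu$-a.s.\ constant. Jensen's inequality applied pointwise to $x\mapsto x^2$ with weights $\{\omega_0(0,e)\}_{e\in U}$ gives $T(g^2)\ge (Tg)^2=g^2$; integrating against the invariant measure $\nu$ yields equality $\nu$-a.s. Strict convexity combined with the strict positivity of every weight forces $g(\theta_{1,e}\omega)$ to be independent of $e\in U$, and together with $Tg=g$ this produces $g\circ\theta_{1,e}=g$ $\nu$-a.s., hence $\mathbb P$-a.s.\ by part (i). The ergodicity of $\{\theta_{1,e}:e\in U\}$ then makes $g$ constant.

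For part (iii), let $\nu'$ be any invariant probability measure with $\nu'\ll\mathbb P$. Applying part (i) to $\nu'$ gives $\nu'\sim\mathbb P\sim\nu$. Birkhoff's pointwise ergodic theorem applied to the $\nu$-stationary ergodic chain of part (ii) yields, for every bounded measurable $g$, that $\frac{1}{n}\sum_{k=0}^{n-1}g(\bar\omega_k)\to\int g\,d\nu$ holds $P_\nu$-almost surely; by the equivalence $\nu\sim\nu'$, the same convergence holds $P_{\nu'}$-almost surely. Taking expectations under $P_{\nu'}$ and using bounded convergence together with the $\nu'$-invariance of the chain gives $\int g\,d\nu'=\int g\,d\nu$ for every bounded measurable $g$, hence $\nu=\nu'$. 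The main delicate step I anticipate is the Jensen equality step in part (ii), whose conclusion rests essentially on the strict positivity of every weight $\omega_0(0,e)$ built into the definition of $\mathcal P(U)$; once this is granted, the remainder is essentially formal.
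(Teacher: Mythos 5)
Your proof is correct, and it is essentially the paper's argument for parts (i) and (iii), but it takes a genuinely different route for part (ii). In (i) both you and the paper test the invariance of $\nu$ against the indicator of the zero set of the density $f=d\nu/d\mathbb P$, use the strict positivity of the weights $\omega_0(0,e)$ to pass to the pointwise inclusion $\{f>0\}\subset\theta_{1,e}^{-1}\{f>0\}$, upgrade it to an equality by stationarity, and close with the ergodicity of $\{\theta_{1,e}\}$; the paper phrases this in terms of $E=\{f=0\}$ and the inequality $1_E(\omega)\ge\omega_0(0,z)1_E(\theta_{1,z}\omega)$, but it is the same calculation. For (ii) the paper works directly on trajectory space: it takes a shift-invariant event $A$, forms the bounded martingale $\phi(\bar\omega_n)=P_{\bar\omega_n}(A)$, shows via L\'evy's $0$--$1$ law that $\phi$ is an indicator $1_B$, and then runs the same positivity/stationarity/ergodicity argument as in (i) on $1_B$. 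You instead invoke the standard characterization that the stationary chain is ergodic iff every bounded function $g$ with $Tg=g$ is constant, and then show $g\circ\theta_{1,e}=g$ for each $e$ by Jensen's inequality: $T(g^2)\ge(Tg)^2=g^2$ becomes an equality after integrating against the invariant measure, and equality in Jensen with full support forces $g(\theta_{1,e}\omega)$ to be constant in $e$. Both arguments are correct. The paper's version is slightly more self-contained (it never needs strict convexity, only the $\{0,1\}$-valued structure of the martingale limit), whereas your version outsources the ``$\phi=1_B$'' step to the general harmonic-function characterization of Markov ergodicity; that characterization is itself usually proved by precisely the martingale argument the paper sketches, so you have not actually bypassed it, only relocated it. Your (iii) is the same consistency argument: $\nu'\sim\mathbb P\sim\nu$ implies $P_{\nu'}\sim P_\nu$, and a Birkhoff average plus $\nu'$-stationarity identifies $\int g\,d\nu'$ with $\int g\,d\nu$. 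One small presentational remark: you rest the positivity of the weights on the definition of $\mathcal P(U)$, which is where it lives, while the theorem's hypothesis of ``ellipticity'' in the paper is, in its stated definition, the spanning condition on $U$; the paper nonetheless also invokes ``ellipticity'' in the proof to mean strict positivity of the weights, so this is a terminology wrinkle in the paper rather than a gap in your argument.
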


\begin{proof} Since the proof is similar to the case of
random walks in static random environments, we will
stress the steps which are different  (see Theorem 1.2 of Lecture 1 of
\cite{BS-02},
for example, for a proof of the theorem for static random walk
in random environment). 

To prove part $(i)$, let $f$ be the  Radon-Nikodym derivative of $\nu$ with
respect to $\mathbb P$ and define $E:=\{f=0\}$. We will prove that
$\mathbb P(E)=0$. Using the fact that $\nu$ is invariant, 
we can conclude that $\mathbb P$-a.s.
for every $z\in U$,
\[
1_E(\omega)\ge \sum_{z'\in U}\omega_0(0,z')1_E(\theta_{1,z'}\omega)
\ge\omega_0(0,z)1_E(\theta_{1,z}\omega).
\]
From the ellipticity assumption and the fact that $1_E(\omega)$
and $1_E(\theta_{1,z}\omega)$ only take the values $0$
and $1$ we see that for each $z\in U$, $\mathbb P$-a.s.
\[
1_E(\omega)\ge 1_E(\theta_{1,z}\omega).
\]
Now since $\mathbb P(E)=\mathbb P(\theta_{1,z}^{-1}E)$, we conclude
that for each $z\in U$, $\mathbb P$-a.s.

\[
1_E= 1_{\theta_{1,z}^{-1}E}.
\]
%
%
%
Thus the function $1_E:\Omega^\N\to\{0,1\}$ is a.-s. shift-invariant under $\{\theta_{1,z}:z\in U\}$. By our ergodicity assumption, we conclude that $1_E$ is a-s. a constant and so
\[
\mathbb P( E)
\in\{0,1\}.
\]
But $\int_{E^c}fd\mathbb P=\int fd\mathbb P=1$ implies that
$\mathbb P(E^c)>0$, so that necessarily $\mathbb P(E)=0$.

Let $T:\Omega^{\mathbb N\times\N}\to\Omega^{\mathbb N\times\N}$ denote the left-shift that maps the sequence $(\bar\omega_n)_{n\ge 0}$ to $(\bar\omega_{n+1})_{n\ge 0}$.
To prove part $(ii)$ as in the static case (see \cite{BS-02})
it is possible to prove that for every $A\in\mathcal B((\Omega^{\mathbb N})^\N)$
such that $T^{-1}A=A$, 
 the process
\[
\phi(\bar\omega_n):=P_{\bar\omega_n}(A),
\]
is a martingale and also there is a set $B\in\mathcal B(\Omega^\N)$
such that
\[
\phi=1_B.
\]
We then show that $\mathbb P$-a.s. for each $z\in U$, the inequality
\[
1_B(\omega)\ge\sum_{z'\in U}\omega_0(0,z')1_B(\theta_{1,z'}\omega)
\ge \omega_0(0,z)1_B(\theta_{1,z}\omega)
\]
is satisfied. Using an argument similar to the
one employed in part $(i)$ we now see that $\nu(B)\in\{0,1\}$,
which proves that $P_\nu(A)=\nu(B)\in\{0,1\}$.

The uniqueness of $\nu$ stated in part $(iii)$ can be obtained following
exactly the same argument as in the static case \cite{BS-02}.
\end{proof}

\subsection{A new  maximum principle for parabolic difference operators on general meshes}
\label{section-parabolic}
The quenched central limit theorem for random walks in static balanced random environments \cite{L-82}
can be proved using lattice versions of the
maximum principle for elliptic operators
of Aleksandrov-Bakel'man-Pucci \cite{A-63,B-61,P-66} for elliptic partial
differential equations 
(see Papanicolaou and Varadhan \cite{PV-82}
for an application to prove a QCLT for diffusions with random
coefficients).
The maximum principle for elliptic difference operators were proved by Kuo and Trudinger in a series of papers
(see for example \cite{KT-90}).

Nevertheless, to prove Theorem~\ref{qclt}, we will need a parabolic maximum principle. 
 Within the context of diffusions,
this was first established by Krylov 
\cite{Kr-76}, and subsequently a discrete version for general meshes proved
by Kuo and Trudinger in \cite{KT-93,KT-95,KT-98}. Here
we state  a new  parabolic maximum principle, Theorem~\ref{maximum}, for difference operators and
prove it in section~\ref{proof-maximal} using a geometric approach.

We firstly introduce some notation. Given $x\in\mathbb Z^d$,
we denote by $|x|_2$ its $l_2$ norm. 
For $x_0\in\Z^d$, $R>0$, let 
\[
B_R(x_0):=\{x\in\Z^d: |x-x_0|_2\le R\},
\]
Consider a balanced time dependent environment 
$a=\{a_n:n\ge 0\}\in\Omega^{\mathbb N}$ 
(c.f. \eqref{environment}).
For any finite subset $\mathcal D\subset\Z^d\times\Z$, define its {\it parabolic
boundary} by
\[
\mathcal D^p:=\left\{ (y,n+1)\notin\mathcal D: \quad a_{n}(x,y-x)>0\text{ for  some } (x,n)\in\mathcal D\right\}.
\]
\begin{figure}[h]
\centering
\includegraphics[scale=0.40]{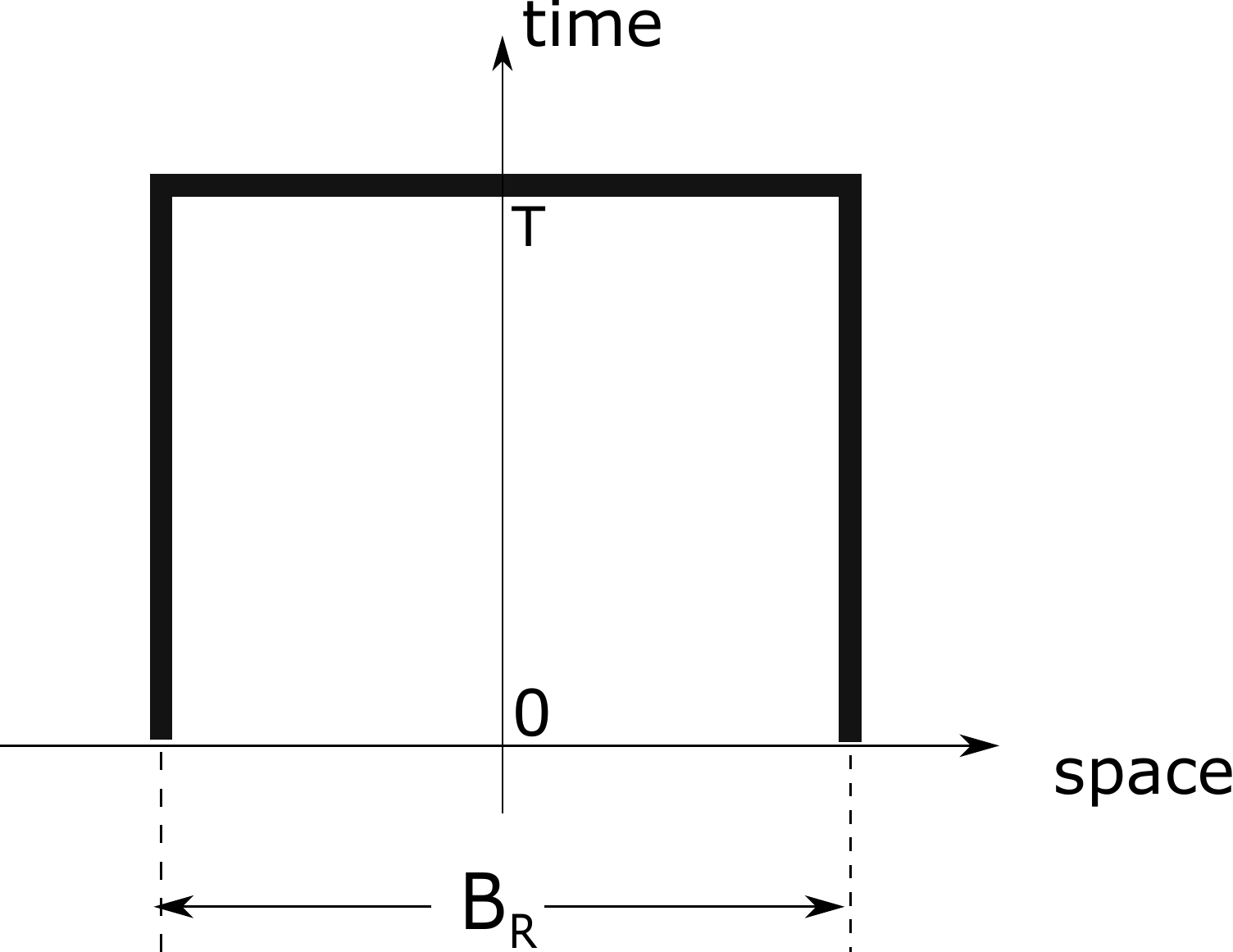}
\caption{The thick black lines represent the  parabolic boundary of $B_R\times[0,T)$.}
\end{figure}
 Define the parabolic operator
\[
\mathcal L_a u(x,n):=\sum_{z\in U}a_n(x,z)u(x+z,n+1)-u(x,n).
\]
For a real function $g$ defined on $\mathcal D\subset\Z^d\times\Z$
and $p>0$ define

\begin{equation}
\label{box-norm}
||g||_{\mathcal D,p}:=\left(\sum_{(x,n)\in\mathcal D}|g(x,n)|^p\right)^{1/p}.
\end{equation}
Set
\begin{equation}\label{def-u-v}
U_{x,n}:=\{a_n(x,z)z: z\in U\}
, \qquad
v(x,n):=\left|conv(U_{x,n})\right|,
\end{equation}
and define
\[
\varepsilon_a(x,n):=\left(a_n(x,0)\frac{v(x,n)}{\#U}\right)^{1/(d+1)},
\]
where $\# U$ denotes the cardinality of the discrete set $U\in\Z^d$.

\begin{theorem}
\label{maximum} Assume that $\mathcal D\subset \Z^d\times\Z$
is a finite set and $\mathcal D\cup\mathcal D^p\subset
B_R\times [0,T]$ for some $R,T>0$.
Let $u$ be a function on
$\mathcal D\cup\mathcal D^p$ that satisfies

\begin{equation}
\label{poisson-inequality}
\mathcal L_au\ge -f\qquad {\rm in}\quad \mathcal D
\end{equation}
for some function $f$ on $\mathcal D$. Then,  if $\varepsilon_a(x,n)>0$ for all $(x,n)\in\mathcal D$, we have
\[
\max_{\mathcal D}u\le\max_{\mathcal D^p}u
+CR^{d/(d+1)}
 \norm{f/\varepsilon_a}_{\mathcal D, d+1},
\]
 where $C=C(U,d)$ is a constant.

\end{theorem}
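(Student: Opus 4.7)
The plan is to establish Theorem~\ref{maximum} via a discrete parabolic Alexandrov--Bakel'man--Pucci (ABP) argument, adapting the geometric method of Kuo--Trudinger for elliptic discrete operators to the parabolic setting with balanced operators on general meshes. After the normalization $u\mapsto u-\max_{\mathcal{D}^p}u$ we may assume $\max_{\mathcal{D}^p}u=0$; set $M:=\max_{\mathcal{D}}u$ and, without loss of generality, $M>0$, attained at some $(x_0,n_0)\in\mathcal{D}$. The goal becomes $M\leq C\,R^{d/(d+1)}\|f/\varepsilon_a\|_{\mathcal{D},d+1}$.

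The engine is a parabolic normal mapping. For each pair $(A,B)\in\mathbb{R}^d\times\mathbb{R}$ set
\[
C_{A,B}:=\max_{(y,m)\in\mathcal{D}\cup\mathcal{D}^p}\bigl(u(y,m)-A\cdot y-Bm\bigr),
\]
so the affine function $\ell_{A,B}(y,m):=A\cdot y+Bm+C_{A,B}$ dominates $u$ on $\mathcal{D}\cup\mathcal{D}^p$ with equality at some contact point. Let $\Gamma\subseteq\mathcal{D}$ denote the parabolic upper contact set of those $(x_*,n_*)$ realized as interior contact points by some $(A,B)$, and let $\chi(x_*,n_*)\subseteq\mathbb{R}^{d+1}$ be the convex set of supergradients at $(x_*,n_*)$. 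The proof rests on two complementary estimates on the image $\bigcup_{(x_*,n_*)\in\Gamma}\chi(x_*,n_*)$.

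First, a \emph{covering bound}: a carefully chosen region $K\subseteq\mathbb{R}^{d+1}$ of slopes with $|K|\geq c_0 M^{d+1}/R^d$ is shown to lie in $\bigcup_{\Gamma}\chi$, by exploiting $u\leq 0$ and $|y|\leq R$ on $\mathcal{D}^p$ together with $u(x_0,n_0)=M$ to force the contact for $(A,B)\in K$ to happen strictly inside $\mathcal{D}$. Second, a \emph{pointwise volume bound}
\[
|\chi(x_*,n_*)|\leq C(U,d)\,\frac{f(x_*,n_*)^{d+1}}{\varepsilon_a(x_*,n_*)^{d+1}}.
\]
To derive this, test the contact inequality $u(x_*+z,n_*+1)-u(x_*,n_*)\leq A\cdot z+B$ against $a_{n_*}(x_*,z)$ and sum over $z\in U$: the balanced condition $\sum_z a_{n_*}(x_*,z)z=0$ combined with $\mathcal{L}_au\geq -f$ produces the one-sided bound $B\geq -f(x_*,n_*)$, while the individual contact inequalities at $z\in U$ (notably $z=0$, together with the analogous ``backward'' constraint from $(x_*,n_*-1)$) confine $(A,B)$ to a $(d+1)$-dimensional polytope whose Lebesgue measure is jointly controlled by the spatial geometry of $\mathrm{conv}(U_{x_*,n_*})$ \emph{and} the self-loop mass $a_{n_*}(x_*,0)$. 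Combining both estimates yields $c_0 M^{d+1}/R^d\leq C\|f/\varepsilon_a\|_{\mathcal{D},d+1}^{d+1}$, and extracting the $(d+1)$-st root finishes the proof.

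The main obstacle is the pointwise volume estimate on $\chi(x_*,n_*)$. In the elliptic discrete ABP of Kuo--Trudinger only the spatial convex-hull volume $v(x_*,n_*)$ enters; here the time direction of the supergradient must be bounded simultaneously, and the spatial and temporal contributions must combine cleanly into $\varepsilon_a^{d+1}=a_{n_*}(x_*,0)\cdot v(x_*,n_*)/\#U$ in the denominator --- this clean combination is what makes the principle genuinely ``new'' beyond existing discrete parabolic variants. A secondary subtlety is the covering lower bound $|K|\geq c_0 M^{d+1}/R^d$, which requires a judicious choice of $K$ respecting the parabolic geometry of $\mathcal{D}\cup\mathcal{D}^p$ (including the fact that the top time slice may lie in $\mathcal{D}^p$) so that no spurious dependence on the temporal extent $T$ survives.
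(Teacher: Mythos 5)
Your proposal builds the normal map out of \emph{full space-time affine functions} $\ell_{A,B}(y,m)=A\cdot y+Bm+C_{A,B}$, so that the supergradient set $\chi(x_*,n_*)\subset\R^{d+1}$ lives in the space of pairs $(A,B)=(\text{spatial slope},\text{time slope})$. This is not merely a cosmetic difference from the paper's construction and it creates a genuine gap in two places.

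First, the covering lower bound. If the contact of $\ell_{A,B}$ were to occur on $\mathcal D^p$, where $u\le0$, then comparing with the value $M$ at the interior max gives $M\le 2(|A|R+|B|T)$; so the only slopes guaranteed to force an interior contact are those in $K=\{(A,B):2(|A|R+|B|T)<M\}$, which has measure of order $M^{d+1}/(R^dT)$, not $M^{d+1}/R^d$. You acknowledge the danger of a $T$-dependence but do not explain how to remove it, and I do not see how it can be removed within your setup. Taking $T\sim R^2$ (the natural parabolic scaling) and running the remaining steps would give $M\lesssim R^{(2d+4)/(d+1)}\cdot\|f/\varepsilon_a\|_\infty$ rather than the desired $M\lesssim R^2\cdot\|f/\varepsilon_a\|_\infty$, so the final estimate degrades.

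Second, the pointwise volume bound. The only upper bound on $B$ at a contact point comes from testing $\ell_{A,B}$ against $(x_*,n_*-1)$, giving $B\le u(x_*,n_*)-u(x_*,n_*-1)$. This quantity is not a priori controlled, and worse, at the earliest time slice $n_*=n_{\min}$ (which is in $\mathcal D$, not $\mathcal D^p$, since the paper's $\mathcal D^p$ only contains points of the form $(y,n+1)$ with $(x,n)\in\mathcal D$) there is no backward constraint at all, so $\chi(x_*,n_{\min})$ is unbounded in $B$ and your claimed estimate $|\chi(x_*,n_*)|\le C f^{d+1}/\varepsilon_a^{d+1}$ fails there.

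The paper avoids both problems by using a strictly \emph{spatial}, one-sided-in-time normal map: $I_u(x,n):=\{p\in\R^d:u(x,n)-u(y,m)\ge p\cdot(x-y)\ \forall(y,m)\ \text{with}\ m>n\}$, and then takes $\chi(x,n):=\{(p,q-x\cdot p):p\in I_u(x,n),\,q\in[u(x,n+1),u(x,n)]\}$. The extra dimension of the image is a \emph{height} (a value of $u$), not a time slope; the covering set $\Lambda=\{(\xi,h):R|\xi|_2<h<M/2\}$ has measure $\sim M^{d+1}/R^d$ with no $T$ in sight. The contact point in Step~1 is selected as the \emph{latest} time at which the test function touches $u$, so the interval $[u(x,n+1),u(x,n)]$ has positive length precisely on the contact set, and the crucial combination
\[
(u(x,n)-u(x,n+1))\,\frac{(L_a^*u)^d}{v(x,n)}\ \le\ C\left(\frac{a_n(x,0)(u(x,n)-u(x,n+1))+L_a^*u}{(d+1)\varepsilon_a(x,n)}\right)^{d+1}=C\left(\frac{-\mathcal L_au}{\varepsilon_a}\right)^{d+1}
\]
via AM--GM is what makes $a_n(x,0)$ and the spatial volume $v(x,n)$ combine cleanly into $\varepsilon_a^{d+1}$. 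Replacing the height dimension by a time-slope dimension destroys this mechanism. To repair your proof you would have to switch to the spatial, forward-in-time normal map; as written, the argument does not close.

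Separately, your Step~2 bound must also pass through the symmetrization step (polar body of $\tilde V_{x,n}:=\mathrm{conv}(\pm U_{x,n})$ rather than of $V_{x,n}$), which uses the balanced condition to show $V_{x,n}^o\subset(\#U)\tilde V_{x,n}^o$ before Mahler's inequality can be applied; this is a minor but necessary subtlety your sketch leaves implicit.
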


\begin{rmk}
The elliptic version of Theorem 2.2 was implicitly obtained in \cite[(44)]{KT-00}. However, there is a minor gap in its proof. That is, \cite[Lemma 3]{KT-00} is not true for general non-symmetric convex bodies. This can be fixed by symmetrization and using the balanced assumption (see \eqref{seven}).
\end{rmk}

\section{Proof of the discrete time QCLT (Theorem \ref{qclt})}
\label{proof-main}
It is easy to check, as in the case of
random walks in static balanced random environments, that
part $(i)$ of Theorem \ref{qclt} implies, through Theorem
\ref{kozlov}, part $(ii)$
(see \cite{L-82}). We therefore will concentrate on the proof of part $(i$).

Throughout this section, we fix a balanced environment $\omega\in\Omega^{\mathbb N}$,
so that for all $x\in\mathbb Z^d$ and $n\ge 0$,

\[
\sum_{e\in U}e\omega_n(x,e)=0
\]

By \eqref{ellipt-cond}, we know that there is $k\in\N$
such that the random walk returns to its starting
point after $k$ steps and such that

\begin{equation}
\label{finite-epsilon}
\mathbb E_{\mathbb P}\left[\varepsilon_k^{-(d+1)}\right]<\infty.
\end{equation}
We will soon see that the case in which $k>1$ can be
reduced to the case $k=1$. Therefore, let us first assume that
$k=1$. Let $N$ be an even natural number. We introduce for $(x,n)\in\mathbb Z^d\times\mathbb Z$ the equivalence classes
\begin{equation}
\label{eclass}
\overline{(x,n)}:=(x,n)+ (2N+1)\mathbb Z^d\times(N^2+1)\mathbb Z.
\end{equation}
In addition we define the
periodized version  $\omega^{(N)}$ of $\omega$ so that 
$\omega^{(N)}_{m}(y)=\omega_{n}(x)$
for $(y,m)\in\mathbb Z^d\times\mathbb Z$ 
with $\overline{(y,m)}=\overline{(x,n)}$ and
\[
(x,n) \in
K_N:=\{z\in\mathbb Z^d:|z|_\infty\le N\}\times\{ n':0\le n'\le N^2\}.
\]
Set
\[
\Omega_{N,\omega}=\Omega_N:=\{\theta_{n,x}\omega^{(N)}: (x,n)\in\mathbb Z^d\times\mathbb Z\}.
\]
It is straightforward to see that the process $(\theta_{n,X_n}\omega^{(N)})_{n\ge 0}$ is a Markov chain 
 with a finite state space $\Omega_N$ and has an invariant measure $\nu_N\ll \mb P_N$, where
\[
\mathbb P_N:=\frac{1}{(N^2+1)}\frac{1}{(2N+1)^d}\sum_{(x,n)\in K_N}\delta_{\theta_{n,x}\omega^{(N)}}.
\]
Although it will not be used in this proof, note that
the  measure $\nu_N$ is of the form
\[
\nu_N=\sum_{(x,n)\in K_N}\phi_N(x,n)\delta_{\theta_{n,x}\omega^{(N)}},
\]
where $\phi_N=\phi_{N,\omega}$ is also the density of an invariant measure
of the random walk $\{\overline{(X_n,n)}:n\ge 0\}$, c.f. \eqref{eclass},
on $K_N$ in the environment $\omega^{(N)}$.

Note that since $\mathbb P$ is ergodic under the
action of $\{\theta_{1,x}:x\in U\}$ which is a subset of the transformations  $\{\theta_{n,x}:(n,x)\in\N\times\Z^d\}$, by the multidimensional ergodic theorem (see \cite[Theorem VIII.6.9]{DF-88}), 
\[
\lim_{N\to\infty}\mathbb P_N=\mathbb P\qquad\mathbb P-a.s.
\]

Define the stopping times $\tau_0=0$ and
\[
\tau_{j+1}=\inf\{i>\tau_j: \abs{X_i-X_{\tau_j}}_\infty> N \mbox{ or } i-\tau_j>N^2\}, \quad j\ge 0.
\]

\begin{lemma}\label{L-one} There exists a constant $c_2>0$ such that for all
$c\ge c_2$, there is an $N_0$ such that for $N\ge N_0$  we have that
\begin{equation}
\label{lemma3}
\sup_{x\in\mathbb Z^d,n\ge 0,\xi\in\Omega_N}E_{x,n,\xi}\bigg[\left(1-\frac{c}{N^2}\right)^{\tau_1}\bigg]\le \frac{1}{2}.
\end{equation}
\end{lemma}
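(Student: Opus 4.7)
My plan is to exploit that every $\xi \in \Omega_N$ is a balanced environment, so that under $P_{x,n,\xi}$ the walk has mean-zero increments; the bound on the Laplace transform of $\tau_1$ then follows by combining an elementary split of the expectation at a threshold $s$ with Doob's $L^2$ maximal inequality. The parabolic maximum principle of Section~\ref{section-parabolic} is not required for this preliminary lemma.

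For any integer $1\le s \le N^2$, I would use the crude split
\[
E_{x,n,\xi}\!\left[\Bigl(1-\tfrac{c}{N^2}\Bigr)^{\tau_1}\right]
\le P_{x,n,\xi}(\tau_1\le s) + \Bigl(1-\tfrac{c}{N^2}\Bigr)^s,
\]
choosing $s:=\lceil (2\log 2)\,N^2/c\rceil$. Using $1-x\le e^{-x}$ and $s\ge (2\log 2)N^2/c$, the second term is bounded by $e^{-cs/N^2}\le e^{-2\log 2}=1/4$. The constraint $s\le N^2$ is automatic once $c$ exceeds a fixed constant (easily satisfied by the final choice of $c_2$) and $N$ is large.

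For the first term, note that the periodization $\omega^{(N)}$ inherits the balanced property from $\omega$, and balancedness is preserved by the space-time shifts used to define $\Omega_N$; hence every $\xi\in\Omega_N$ is balanced. Writing $Y_i:=X_{n+i}$, the process $Y_i-x$ is therefore a $d$-dimensional martingale under $P_{x,n,\xi}$, and $|Y_i-x|_2^2$ is a non-negative submartingale. Because $s\le N^2$, one has $\{\tau_1\le s\}\subset\{\max_{1\le i\le s}|Y_i-x|_\infty>N\}$, so Doob's $L^2$ maximal inequality yields
\[
P_{x,n,\xi}(\tau_1\le s)\;\le\; \frac{E_{x,n,\xi}|Y_s-x|_2^2}{N^2}\;\le\; \frac{sM^2}{N^2},
\]
where $M:=\max_{e\in U}|e|_2$ and the last step uses orthogonality of martingale increments together with the uniform jump bound $|Y_{i+1}-Y_i|_2\le M$. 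With $s\le (2\log 2)N^2/c+1$, this is at most $(2\log 2)M^2/c + M^2/N^2$, which is $\le 1/4$ once $c\ge c_2:=16\,M^2\log 2$ and $N_0$ is chosen so that $M^2/N_0^2\le 1/8$. Summing the two contributions gives the desired bound of $1/2$, uniformly in $(x,n,\xi)$.

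There is no serious obstacle here; the only care required is the rounding in the definition of $s$ and ensuring the $N$-dependent error $M^2/N^2$ is absorbed into the choice of $N_0$. The essential structural input is that balancedness is inherited by every $\xi\in\Omega_N$, which provides the uniformity in the environment for free.
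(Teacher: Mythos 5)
Your proof is correct and takes essentially the same route as the paper: split the expectation at a threshold (your $s$, the paper's $K$), bound $P(\tau_1\le s)$ via Doob's maximal inequality for the martingale $X_\cdot$, and absorb the geometric term by taking $c$ large. The only cosmetic difference is that you apply Doob's $L^2$ maximal inequality to $|Y_\cdot-x|_2^2$ together with orthogonality of increments, whereas the paper applies Doob's $L^1$ inequality coordinate-wise and then Cauchy--Schwarz; these are equivalent up to constants.
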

\pf
The proof follows the lines of \cite[Lemma 4]{GZ-10}.
Since $\{X_n:n\ge 0\}$ is a martingale, by Doob's martingale inequality, for any $1\le K<N^2$,
\begin{align*}
P_{x,n,\xi}(\tau_1\le K)
&\le 2\sum_{i=1}^d \sup_{\xi\in\Omega_N}P_{x,n,\xi}(\max_{0<m\le K}(X_{n+m}-x)^+(i)>N)\\
&\le \frac{2}{N}\sum_{i=1}^d \sup_{\xi\in\Omega_N}E_{x,n,\xi}[(X_{n+K}-x)^+(i)]\le \dfrac{2dC_U\sqrt K}{N},
\end{align*}
where for each $y\in\mathbb Z^d$, $y(i)$ denotes its $i$-th coordinate and $C_U=\max\{|e|:e\in U\}$. Hence for every $c>0$ we have that
\begin{align*}
E_{x,n,\xi}\bigg[\left(1-\dfrac{c}{N^2}\right)^{\tau_1}\bigg]
\le 
\left(1-\frac{c}{N^2}\right)^K+\frac{2dC_U\sqrt K}{N}.
\end{align*}
Taking $K=(\frac{N}{8dC_U})^2$ it follows that for $c$ large enough
whenever $N$ is large enough then inequality \eqref{lemma3} is satisfied.
\qed

Denote by $S=S_{\omega^{(N)}}$ the transition semigroup of the environment Markov chain $\{\xi_n:n\ge 0\}$
defined for each $\xi\in\Omega_N$ as $\xi_n:=\theta_{n,X_n}\xi$ for $n\ge 0$. That is, for every  function $g$ on $\Omega_N$ and  $\xi\in\Omega_N$, 
for $k\ge 0$ define
\[
S^kg(\xi):=E_{0,0,\xi} \left[g\left(\xi_{k}\right)\right], \quad\forall k\in\mathbb N.
\]
Since $\nu_N$ is the invariant {distribution} for the finite-state Markov chain $(\xi_k)_{k\ge 0}$, we have 
\[
\int gd\nu_N=\int S^k gd\nu_N, \quad\forall k\in \mathbb N.
\] 
Let $c_2, N_0$ be the same constants as in Lemma~\ref{L-one}. For $N\ge N_0$, putting $\rho=\rho(\omega,N):=1-\frac{c_2}{N^2}\in(0,1)$, 
we see that
\begin{align}\label{eq1}
(1-\rho)^{-1}\int gd\nu_N 
&=\sum_{k=0}^\infty \rho^k\int S^k gd\nu_N\le \max_{\xi\in \Omega_N}\sum_{k=0}^\infty \rho^k S^kg(\xi)
=\max_{\xi\in\Omega_N} E_{0,0,\xi}
\bigg[\sum_{k=0}^\infty\rho^k g(\xi_k)\bigg].
\end{align}
On the other hand,  we have that
\begin{align}\label{eq2}
\max_{\xi\in\Omega_N} E_{0,0,\xi}
\bigg[\sum_{k=0}^\infty\rho^k g(\xi_k)\bigg]
&\le \sum_{m=0}^\infty \max_{\xi\in\Omega_N}E_{0,0,\xi}\bigg[\rho^{\tau_m} 
\sum_{k\in[\tau_m,\tau_{m+1})}g(\xi_k)
\bigg]\nonumber\\
&\le \sum_{m=0}^\infty \bigg(\max_{\xi\in \Omega_N}E_{0,0,\xi}[\rho^{\tau_1}]\bigg)^m
 \max_{\xi\in \Omega_N}E_{0,0,\xi}\bigg[\sum_{k=0}^{\tau_1-1}g(\xi_k)\bigg]\nonumber\\
&\le
2 
 \max_{\xi\in \Omega_N}E_{0,0,\xi}\bigg[\sum_{k=0}^{\tau_1-1}g(\xi_k)\bigg]
\end{align}
where in the first inequality we used the strong Markov property
at times $\tau_m, \tau_{m-1}$ up to $\tau_1$ successively,
and in the last inequality we used  inequality \eqref{lemma3} of Lemma~\ref{L-one}.
 Recall that $K_N^p$ denotes the parabolic boundary of $K_N$.
Now, for any $(x,n)\in K_N\cup K_N^p$ and $\xi\in\Omega_N$, 
define 
\[
f_\xi(x,n):=E_{ x,n,\xi}\left[\sum_{k=0}^{\tau-1}g(\xi_k)\right]
\]
where $\tau=\inf\{i\ge 0: |X_i|_\infty> N \mbox{ or }i> N^2\}$.
Then $f_\xi$ satisfies
\begin{equation*}
\left\{
\begin{array}{rl}
\mathcal L_af_\xi(x,n)=-G_\xi(x,n), & \text{if } (x,n)\in K_N\\
f_\xi(x,n)=0, \quad& \text{if } (x,n)\in K_N^p.
\end{array}
\right.
\end{equation*}
where $G_\xi(x,n):=g(\theta_{x,n}\xi)$.
We can now apply Theorem \ref{maximum}
to conclude 
\begin{align}\label{eq3}
\max_{\xi\in \Omega_N}f_\xi( 0,0)
&\le \max_{\xi\in\Omega_N} CN^{d/(d+1)}\norm{G_\xi/\varepsilon_1}_{K_N,d+1}\nonumber\\
&=CN^2\norm{g/\varepsilon_1}_{L^{d+1}(\P_N)},
\end{align}
where the norm $||\cdot ||_{K_N,d+1}$ is defined in \eqref{box-norm}.
Therefore, combining \eqref{eq1}, \eqref{eq2} and \eqref{eq3},
we conclude that for some constant $C>0$,
\[
\int gd\nu_N\le C \norm{g/\varepsilon_1}_{L^{d+1}(\P_N)}.
\]
Since $\Omega$ is pre-compact, using Prohorov's theorem,
we can extract a subsequence $\nu_{N_k}$ of $\nu_N$ which
converges weakly to some limit $\nu$ as $k\to\infty$. 
Note that by the construction, any limit of $\nu_N$ is an invariant distribution for the Markov chain $(\bar\omega_n)$, c.f. \cite{GZ-10} or \cite{L-82}.
Then, by the ergodic theorem and the assumption $E_\P[1/\varepsilon_1^{d+1}]<\infty$, c.f. \eqref{finite-epsilon}, we would conclude that
\[
\int gd\nu\le C\norm{g/\varepsilon_1}_{L^{d+1}(\P)} \quad\mbox{ for any 
continuous function $g$ on $\Omega^{\mathbb N}$}.
\]
The above inequality implies that $\nu$ is absolutely
continuous with respect to the probability measure $\mu$
defined by
\[
d\mu:=\frac{1}{\mathbb E_{\mathbb P}\left[
\varepsilon^{-(d+1)}_1(0,0)\right]}
\frac{1}{\varepsilon^{d+1}_1(0,0)}d\mathbb P.
\]
Since $\mu$ is by definition absolutely continuous with respect to
$\mathbb P$, we conclude that $\nu\ll\mathbb P$.
Now, note also that  Theorem~\ref{kozlov} ensures that $\nu$ is unique.

In the case in which \eqref{finite-epsilon} is satisfied
for $k>1$, by the same argument as in the case $k=1$,
we can construct an invariant measure $\nu_k$ which
is absolutely continuous with respect to $\P$,
for the environmental process
looked at times which are multiples of $k$, defined
for $n\ge 0$ by
\[
\bar\omega^{(k)}_n:=\theta_{nk,X_{nk}}\omega.
\]
We will now show how to construct from $\nu_k$ an invariant
measure $\nu$ which
is absolutely continuous with respect to $\P$, for the environmental
process $\{\bar\omega_n:n\ge 0\}$. Define for every bounded
and continuous function $g$, the measure $\nu$ by
\[
\int gd\nu:=
\frac{1}{k}\int\sum_{i=0}^{k-1} R^i gd\nu_k,
\]
where the operator $R$ is defined  by
\[
Rg(\omega):=E_{0,0,\omega}[g(\theta_{1,X_1}\omega)]=
\sum_{e\in U}\omega_0(0,e)g(\theta_{1,e}\omega)
\]
and $R^0=I$ is the identity map. 
Then note that 
\[
\int Rg d\nu
=\frac{1}{k}\int \sum_{i=1}^k R^ig d\nu_k
=\int g\dd\nu+\frac{1}{k}\int (R^k g-g)\dd\nu_k
=\int g\dd\nu,
\]
where in the last equality we used the fact that $\nu_k$ is an invariant distribution for the kernel $R^k$.
 This proves that $\nu$ is an invariant measure
for the environmental process. To see that $\nu$ is absolutely
continuous with respect to $\P$ note that for each
measurable $A$ in $\Omega$, with $\P(A)=0$, we have

\[
\int R^i 1_Ad\nu_k \le \sum_{z\in U_i}\nu_k(\theta_{i,z}^{-1}A)=0
\quad
\forall i=1,\ldots k,
\]
since  the stationarity
of $\P$ implies that $\P(\theta_{i,z}^{-1}A)=0$ which in turn
implies by the fact that $\nu_k\ll P$ that 
$\nu_k(\theta_{i,z}^{-1}A)=0$. Therefore we conclude that
$\nu(A)=0$ and hence $\nu$ is absolutely continuous
with respect to $\P$.

\section{Proof of the maximum principle (Theorem~\ref{maximum})}
\label{proof-maximal}
Here we will prove the maximum principle in Theorem~\ref{maximum}.
 Define
\[
M:=\max_{\mathcal D}u.
\]
Without loss of generality assume that $M>0$, $\max_{\mathcal D^p}u\le 0$, $\varepsilon_a>0$ and $f\ge 0$ in $\mathcal D$. For each $(x,n)\in\mathcal D$ define
\[
I_u(x,n):=
\left\{p\in\mathbb R^d:u(x,n)-u(y,m)\ge p\cdot (x-y)\ {\rm for}\ {\rm all}\
(y,m)\in\mathcal D\cup\mathcal D^p\text { with } m>n\right\}.
\]
Let also
\[
\Gamma=\Gamma(u,\mathcal D):=\left\{(x,n)\in\mathcal D: I_u(x,n)\ne \emptyset\right\},
\]
\[
\Gamma^+=\Gamma^+(u,\mathcal D):=
\left\{(x,n)\in\Gamma: R|p|_2<u(x,n)-p\cdot x\ {\rm for}\ {\rm some}\ 
p\in I_u(x,n)\right\}
\]
and
\[
\Lambda:=\left\{(\xi,h)\in\mathbb R^d\times\mathbb R: R|\xi|_2<h<\frac{M}{2}
\right\}\subset\mathbb R^{d+1}.
\]
For $(x,n)\in\mathcal D$ define the set

\[
\chi (x,n):=\left\{(p,q-x\cdot p):p\in I_u(x,n), q\in [ u(x,n+1),u(x,n)]\right\}
\subset\mathbb R^{d+1}.
\]

\noindent {\it Step 1.} We will first show that

\begin{equation}
\label{inclusion}
\Lambda\subset\chi(\Gamma^+)=\bigcup_{(x,n)\in\Gamma^+}\chi(x,n).
\end{equation}
Indeed, let $(\xi,h)\in\Lambda$, and define for $(x,n)\in\mathcal D$,
\[
\phi(x,n):=u(x,n)-\xi\cdot x-h.
\]
Let $(x_0,n_0)\in\mathcal D$ be such that
$u(x_0,n_0)=M$. Then, by the definition of
$\Lambda$, we see that $\phi(x_0,n_0)>0$ and
\[
\phi(x,n)<0,
\]
for $(x,n)\in\mathcal D^p$.
We now claim that there exists $(x_1,n_1)\in\Gamma^+$ with $n_1\ge n_0$
such that $\phi(x_1,n_1)\ge 0$ and $(\xi,h)\in\chi(x_1,n_1)$. Indeed,
for $x\in B_R$, let
\[
N_x:=\max\{n:(n,x)\in\mathcal D\ {\rm and}\ \phi(x,n)\ge 0\}
\]
and
\[
 n_1:=\max_{x\in B_R}N_x\ge n_0\ge 0,
\]
with the convention $\max\emptyset=-\infty$. Let $x_1\in B_R$ 
be such that $n_1=N_{x_1}$. Thus, for all $(x,n)\in\mathcal D\cup\mathcal D^p$
with $n> n_1$,
\[
u(x,n)-\xi\cdot x<h\le u(x_1,n_1)-\xi\cdot x_1.
\]
Hence $\xi\in I_u(x_1,n_1)\ne\emptyset$ and $h+\xi\cdot x_1\in
(u(x_1, n_1+1),u(x_1,n_1)]$, which proves the claim and
the statement of display \eqref{inclusion}.

\noindent {\it Step 2.} We will now show that
for each $(x,n)\in\Gamma^+$,
\begin{equation}
\label{step2}
\left|I_u(x,n)\right|
\le 
C(\#U)^d\frac{(L_a^*u(x,n))^d}{|conv(U_{x,n})|},
\end{equation}
where for every $(x,n)\in \Gamma^+$ and function
$h(x,n):\Gamma^+\to\mathbb R$ we define
\[
L^*_ah(x,n):=\sum_{z\ne 0}a_n(x,z)\left(h(x,n)-h(x+z,n+1)\right).
\]
Fix $p\in I_u(x,n)$ and set
\[
w(y,m):=u(y,m)-p\cdot y.
\]
Then $I_w(x,n)=I_u(x,n)-p$. 
In particular, $0\in I_w(x,n)$ and we have $w(x,n)-w(x+e,n+1)\ge 0$ for all $e\in U$.
Furthermore, if
$q\in I_w(x,n)$ and $e\in U$ 
then
\[
w(x,n)-w(x+e,n+1)\ge -e\cdot q.
\]
Hence, for each $q\in I_w(x,n)$ and $z\in U_{x,n}$ 
 we have
\begin{equation}
\label{five}
L^*_a u(x,n)=L^*_a w(x,n)
=\sum_{e\ne 0}a_n(x,e)\left(w(x,n)-w(x+e,n+1)\right)
\ge 
-z\cdot q.
\end{equation}
Recall the definition of the $U_{x,n}$ in \eqref{def-u-v}.
Let now $V_{x,n}:=conv(U_{x,n})$  and consider
the polar body of $V_{x,n}$, given by
 $V_{x,n}^o:=\{z\in\mathbb R^d:z\cdot y\le 1\
{\rm for}\ {\rm all}\ y\in V_{x,n}\}$.
Display \eqref{five} implies that
\begin{equation}
\label{six}
-I_w(x,n)\subset L^*_a u(x,n) V^o_{x,n}.
\end{equation}
Using the fact that $\sum_{l\in U_{x,n}} l=0$,
 note that if $z\in V^o_{x,n}$, then for each $y\in U_{x,n}$,
\[
z\cdot (-y)=z\cdot\sum_{l\in U_{x,n}\backslash\{y\}} l\le C(\#U).
\]
Hence, setting $\tilde U_{x,n}:=\left\{\pm y:y\in U_{x,n}\right\}$
and $\tilde V_{x,n}:=conv(\tilde U_{x,n})$ we see that
\begin{equation}
\label{seven}
V^o_{x,n}\subset\left\{z:z\cdot y\le (\#U)\ {\rm for}\ {\rm all}\ y\in \tilde
U_{x,n}\right\}=  (\#U)\tilde V^o_{x,n}.
\end{equation}
Combining (\ref{six}) with (\ref{seven}) we conclude that
\[
 -I_w(x,n)\subset  (\#U) L^*_a u(x,n) \tilde V^o_{x,n}.
\]
Now, since $\tilde V^o_{x,n}$ is a symmetric convex body,
by Mahler's inequality \cite{M-39}, we see that
\[
|\tilde V^o_{x,n}|\le \frac{4^d}{|\tilde V_{x,n}|},
\]
which finishes the proof of (\ref{step2}).

\noindent {\it Step 3.} Here we derive the maximum inequality
from steps 1 and 2. Set
\[
\chi(\Gamma^+,x):=\bigcup_{m:(x,m)\in\Gamma^+}\chi(x,m).
\]
For each $x\in\mathcal D$, define
$\rho_x:\mathbb R^d\to\mathbb R^d$ by
\[
\rho_x(y,m)=(y,m+y\cdot x)
\]
and let
\[
\tilde\chi(x,n):=\rho_x\circ\chi(x,n)=I_u(x,n)\times 
[u(x, n+1),u(x,n)]
\subset\mathbb R^{d+1}.
\]
Then, using the inequality $a^{\frac{1}{d+1}}b^{\frac{d}{d+1}}\le \frac{a+db}{d+1}$,
valid for $a\ge 0$, $b\ge 0$, and the notation $\sum'$ for
the sum running from $n=1$ to $n=T$ with $u(x,n)-u(x, n+1)$
and $L^*_au(x,n)$ positive
we see that
\begin{align}\label{last}
\left|\chi(\Gamma^+,x)\right|
&=
\left|\tilde\chi(\Gamma^+,x)\right|\nonumber\\
&\le 
\sideset{}{'}\sum 
(u(x,n)-u(x,n+1))\left|I_u(x,n)\right| 1_{(x,n)\in\Gamma^+}\nonumber\\
&\le 
(\# U)^d\sideset{}{'}\sum 
(u(x,n)-u(x,n+1))\frac{\left(L^*_au(x,n)\right)^d}{v(x,n)} 1_{(x,n)\in\Gamma^+}\nonumber\\
&\le  
C(\# U)^d\sideset{}{'}\sum 
\left(\frac{a_n(x,0)(u(x,n)-u(x,n+1))+L^*_au(x,n)}{(d+1)
\varepsilon(x,n)}\right)^{d+1}1_{(x,n)\in\Gamma^+}\nonumber\\
&=
C(\# U)^d\sideset{}{'}\sum
\left(\frac{-\mathcal L_au(x,n)}{\varepsilon(x,n)}\right)^{d+1}1_{(x,n)\in
\Gamma^+}.
\end{align}
Now, note that
\[
|\Lambda|=C \frac{M^{d+1}}{R^d}.
\]
Combining this with inequalities \eqref{inclusion}, \eqref{last} and
using the hypothesis \eqref{poisson-inequality}, we see that
\[
C \frac{M^{d+1}}{R^d}\le\sum_{(x,n)\in\mathcal D}\frac{1}{\varepsilon^{d+1}}
|f|^{d+1} 1_{(x,n)\in\Gamma^+}.
\]
Therefore,
\[
\max_{(x,n)\in\mathcal D}u(x,n)\le C R^{\frac{d}{d+1}}\left\|\frac{f}{\varepsilon}
\right\|_{\mathcal D,d+1}.
\]

\section{Proof of the continuous time QCLT (Theorem~\ref{qclt-continuous})}
\label{section-continuous}
The proof of Theorem \ref{qclt-continuous} follows a strategy similar to that
of Theorem \ref{qclt}. In other words, since the continuous time
random walk is also $\mathbb Q$-a.s. a martingale, it suffices to construct an invariant measure for the environmental
process which is absolutely continuous with respect to
the initial law $\mathbb Q$ of the environment. 
However, unlike the discrete time case, the continuous time process is allowed to jump at unbounded rates. To obtain a QCLT, we need not only to deal with the degeneracy of the ellipticity, but also to control the jump rates. 
This is achieved by first performing a time change to ``slow-down" the original RWRE, and then applying a maximum principle for (continuous-time) parabolic difference operators to construct the invariant measure.

Let us state the version of the parabolic maximum principle that we use. Consider a balanced 
continuous time-dependent environment
$\{a_t:t\ge 0\}$, c.f. \eqref{bcte}, with
$a_t:=\{a_t(x):x\in\mathbb Z^d\}$ and
$a_t(x):=\{a_t(x,e):e\in U\}\in\mathcal Q$.  
Given any finite set $D\subset \mathbb Z^d$ and $T>0$, we define 

\[
\mathcal D:=D\times[0,T).
\]
Define the {\it parabolic boundary} of $\mathcal D$ by 
\[
\mathcal D^p:=\mathcal D^\ell\cup\mathcal D^T,
\]
where $\mathcal D^T=D\times\{T\}$ denotes its {\it time boundary} and 
\[\mathcal D^\ell:=\{(x,t)\notin\mathcal D: a_t(y,x-y)>0 \mbox{ for some }(y,t)\in\mathcal D\}
\] is the {\it lateral boundary} of $\mathcal D$.
%

For $p>0$ and any real-valued function $g$ that is summable on $\mathcal D$, define
\[
\norm{g}_{\mathcal D,p}:=
\left(\int_0^T\sum_{x\in D}|g(x,t)|^p\dd t\right)^{1/p}.
\]
We  can now state the maximum principle.

\begin{theorem}
\label{max-cont} Assume that $a$ is a balanced environment. Let $u$
be  a function on $\mathcal D\cup\mathcal D^p$ which is differentiable with respect
to $t$ in $(0,T)$. Let $f$ be an integrable function in $\mathcal D$.
Assume that $u$ satisfies
\[
\mathcal L_a u\ge f\qquad {\rm in}\quad\mathcal D.
\]
 Then, there is a constant $C=C(U,d)>0$
such that
\[
\sup_{\mathcal D}u\le \sup_{\mathcal D^p}u+CR^{d/(d+1)}||f/\varepsilon||_{\mathcal D,d+1},
\]
where $R:=diam(D)$ and $\varepsilon$ is as defined in \eqref{epsilon-cont}.

\end{theorem}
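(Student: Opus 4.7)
\medskip

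The plan is to adapt the three-step geometric argument used for the discrete-time maximum principle (Theorem~\ref{maximum}), replacing sums in the time variable by integrals and the discrete increment $u(x,n)-u(x,n+1)$ by $-\partial_t u(x,t)$. As in the discrete case, we may assume without loss of generality that $M:=\sup_{\mathcal D}u>0$, $\sup_{\mathcal D^p}u\le 0$, and $f\le 0$ (since the hypothesis is $\mathcal L_a u\ge f$, the relevant quantity is $-\mathcal L_a u\le -f$). For each $(x,t)\in\mathcal D$ define the contact set
\[
I_u(x,t):=\{p\in\R^d: u(x,t)-u(y,s)\ge p\cdot(x-y)\text{ for all }(y,s)\in\mathcal D\cup\mathcal D^p\text{ with }s>t\},
\]
the upper contact region $\Gamma^+:=\{(x,t)\in\mathcal D: I_u(x,t)\ne\emptyset\text{ and some }p\in I_u(x,t)\text{ satisfies }R|p|_2<u(x,t)-p\cdot x\}$, and the target set $\Lambda:=\{(\xi,h): R|\xi|_2<h<M/2\}\subset\R^{d+1}$.

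\textbf{Step 1 (normal map covers $\Lambda$).} Just as in the discrete proof, for each $(\xi,h)\in\Lambda$ the affine function $\phi(x,t)=u(x,t)-\xi\cdot x-h$ is positive at a maximizer of $u$ and strictly negative on $\mathcal D^p$. Picking the largest time $t_1$ for which $\phi(\cdot,t_1)\ge 0$ somewhere in $D$, and any $x_1$ realizing this, one obtains $(x_1,t_1)\in\Gamma^+$ with $\xi\in I_u(x_1,t_1)$. The only new point is that, thanks to the differentiability of $u$ in $t$, the value $h+\xi\cdot x_1$ lies in the closed interval between $\lim_{s\downarrow t_1}(u(x_1,s)-\xi\cdot x_1)$ and $u(x_1,t_1)-\xi\cdot x_1$; this interval is exactly what the map $(x,t)\mapsto \{(\xi,h): \xi\in I_u(x,t),\,h=-\xi\cdot x+q,\,q\in[u(x,t)+\int_{t}^{T}\partial_s u\,\mathbf 1_{\partial_s u\le 0}ds,u(x,t)]\}$ will cover once we pass to integrals in Step 3.

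\textbf{Step 2 (pointwise volume bound via Mahler).} Fix $(x,t)\in\Gamma^+$ and $p\in I_u(x,t)$; set $w(y,s)=u(y,s)-p\cdot y$. For every $e\in U$ one has $w(x,t)\ge w(x+e,t)$ (take $s=t$, $y=x+e$ in the definition), so defining $L_a^* u(x,t):=\sum_{e\in U}a_t(x,e)\bigl(u(x,t)-u(x+e,t)\bigr)=-\sum_{e\in U}a_t(x,e)[u(x+e,t)-u(x,t)]$, the inequality $L_a^* u(x,t)\ge -z\cdot q$ holds for every $q\in I_u(x,t)-p$ and every $z\in U_{x,t}$. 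This gives $-I_u(x,t)+p\subset L_a^* u(x,t)\,V_{x,t}^o$ where $V_{x,t}=\mathrm{conv}(U_{x,t})$. Because the environment is balanced, $\sum_{y\in U_{x,t}}y=0$, so the argument from (\ref{seven}) symmetrizes $V_{x,t}$ at cost of a factor $(\#U)$, and Mahler's inequality yields
\[
|I_u(x,t)|\le C(\#U)^d\,\frac{(L_a^* u(x,t))^d}{|\mathrm{conv}(U_{x,t})|}.
\]

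\textbf{Step 3 (integrate in time and conclude).} Following the discrete argument but with Fubini, for each $x\in D$ the set $\chi(\Gamma^+,x)\subset\R^{d+1}$ of ``slope/intercept'' pairs attached to $x$ has volume bounded by $\int_0^T (-\partial_t u(x,t))\,|I_u(x,t)|\,\mathbf 1_{(x,t)\in\Gamma^+}\,dt$, since at a contact point the upper envelope of $u$ is non-increasing in $t$. Applying Step~2 and then the weighted AM--GM inequality $\alpha^{1/(d+1)}\beta^{d/(d+1)}\le(\alpha+d\beta)/(d+1)$ with $\alpha=(-\partial_t u)^{d+1}$ and $\beta=(L_a^* u)^{(d+1)/1}\cdot|\mathrm{conv}(U_{x,t})|^{-1}$ (after suitable normalization by $\varepsilon^{d+1}=|\mathrm{conv}(U_{x,t})|$), the contribution becomes
\[
|\chi(\Gamma^+,x)|\le C(\#U)^d\int_0^T\!\left(\frac{-\partial_t u(x,t)+L_a^* u(x,t)}{(d+1)\,\varepsilon(x,t)}\right)^{\!d+1}\mathbf 1_{\Gamma^+}\,dt=C(\#U)^d\int_0^T\!\left(\frac{-\mathcal L_a u(x,t)}{\varepsilon(x,t)}\right)^{\!d+1}\mathbf 1_{\Gamma^+}\,dt.
\]
Summing over $x\in D$, using $\Lambda\subset\bigcup_x\chi(\Gamma^+,x)$ from Step~1 and $|\Lambda|=C M^{d+1}/R^d$, and finally invoking $-\mathcal L_a u\le -f\le |f|$ on $\Gamma^+$, yields $M\le CR^{d/(d+1)}\|f/\varepsilon\|_{\mathcal D,d+1}$, which is the desired bound.

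The main obstacle is Step~3: in discrete time the ``time-volume'' of $\chi(\Gamma^+,x)$ was a clean telescoping sum, whereas here we must argue via Fubini that the set of intercepts $h$ achieved at a fixed slope $p$ at location $x$ is precisely an interval of length controlled by $\int (-\partial_t u)\mathbf 1_{\Gamma^+}\,dt$. This requires a careful parametrization of $\chi$ and use of the differentiability of $u$ in $t$ to ensure no set of positive measure in $\Lambda$ is missed at a time where $u(x,\cdot)$ jumps; the continuity in $t$ coming from differentiability on $(0,T)$ is exactly what makes the argument go through.
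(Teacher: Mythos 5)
Your proof is correct and is exactly the route the paper intends: the paper never gives a separate argument for Theorem~\ref{max-cont}, treating it as a routine transcription of the discrete Theorem~\ref{maximum}, which is what you carry out. Two small points worth tightening: in Step~2 the definition of $I_u(x,t)$ requires $s>t$, so the inequality $w(x,t)\ge w(x+e,t)$ is obtained by letting $s\downarrow t$ and using continuity of $u$ in $t$ rather than by ``taking $s=t$'' (alternatively, define $I_u$ with $s\ge t$); and in Step~1 the ``interval'' $[\lim_{s\downarrow t_1}u(x_1,s)-\xi\cdot x_1,\,u(x_1,t_1)-\xi\cdot x_1]$ is degenerate by continuity, so the clean statement is simply $h+\xi\cdot x_1=u(x_1,t_1)$, with the positive-measure coverage of $\Lambda$ recovered via the Fubini computation of $|\chi(\Gamma^+,x)|$ in Step~3 (which also implicitly uses that everywhere-differentiability of $u(x,\cdot)$ with integrable derivative yields the fundamental theorem of calculus).
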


Recall that the space-time process $(X_t,t)_{t\ge 0}$ is a Markov process on $\Z^d\times\R$ with generator $\mathcal L_\omega$.
To show that $(X_t)_{t\ge 0}$ does not explode, i.e, there are only finitely many jumps within finite time, we will first consider a slowed-down process. 
Recall the definition of $\upsilon_\omega$ in \eqref{epsilon-cont}.
Let 
\[
(Y_t, T_t)_{t\ge 0}
\] 
be the Markov process on $\Z^d\times\R$ with generator $(\upsilon_\omega+1)^{-1}\mathcal L_\omega$ and initial state $(Y_0,T_0)=(0,0)$. Note that the process $(Y_\cdot,T_\cdot)$ has slower transition rates on both the $\Z^d$-coordinate and the $\R$-coordinate, compared to $(X_t,t)$.
Note also that 
\begin{equation}\label{eq-T}
T_t=\int_0^t\frac{1}{\upsilon_\omega(Y_s,T_s)+1}\dd s
\end{equation}
and
\begin{equation}\label{X-T}
X_{T_t}\stackrel{d}=Y_t.
\end{equation}
Define the stopping times $\tau_0=\tau_0(Y_\cdot,T_\cdot)=0$, and
\[
\tau_{j+1}=\tau_{j+1}(Y_\cdot,T_\cdot)
=\inf\{t>\tau_j:|Y_t-Y_{\tau_j}|_\infty>N \mbox{ or } T_t-\tau_j>N^2\}.
\] 

With abuse of notation, we enlarge the probability space and still use $P^c_{0,0,\omega}$  to denote the joint law  of $X_\cdot$, the environmental process $\bar\omega_\cdot$ and the process $(Y_\cdot,T_\cdot)$ with initial state $(0,0)$. We let $E^c_{0,0,\omega}$ denote the expectation under $P^c_{0,0,\omega}$.
We have the following analogue of Lemma \ref{L-one}.

\begin{lemma}\label{L-two}
There exists a constant $c>0$ such that for all $N$ large and any $\omega\in\Omega$,  
\[
E^c_{0,0,\omega}\bigg[\big(1-\dfrac{c}{N^2}\big)^{\tau_1(Y_\cdot, T_\cdot)}\bigg]\le \frac{1}{2}.
\]
\end{lemma}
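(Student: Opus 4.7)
The plan is to mimic the discrete-time argument from Lemma~\ref{L-one}, exploiting the fact that the slowed-down process $Y_\cdot$ behaves like a bounded-rate martingale uniformly in $\omega$.

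First I would observe that because $\omega$ is balanced, the coordinate functions $y\mapsto y\cdot e_i$ are harmonic for the generator $\mathcal{A}:=(\upsilon_\omega+1)^{-1}\mathcal{L}_\omega$ of $(Y_\cdot,T_\cdot)$, so $Y_t$ is a $P^c_{0,0,\omega}$-martingale. Moreover, the total jump rate of $Y_\cdot$ at $(y,\tau)$ equals $\upsilon_\omega(y,\tau)/(\upsilon_\omega(y,\tau)+1)\le 1$ uniformly in $\omega$, and each jump has magnitude at most $C_U:=\max\{|e|:e\in U\}$. Applying $\mathcal{A}$ to $(y,\tau)\mapsto(y\cdot e_i)^2$ and using $\upsilon_\omega/(\upsilon_\omega+1)\le 1$ yields the uniform bound $\mathcal{A}(y\cdot e_i)^2\le C_U^2$, so by Dynkin's formula
\[
E^c_{0,0,\omega}\bigl[(Y_t\cdot e_i)^2\bigr]\le C_U^2\, t.
\]
Likewise $T_t=\int_0^t(\upsilon_\omega(Y_s,T_s)+1)^{-1}\,\dd s\le t$ pathwise.

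Next I would apply Doob's $L^1$-maximal inequality to the submartingales $|Y_t\cdot e_i|$, $i=1,\ldots,d$, and use Jensen's inequality to get
\[
P^c_{0,0,\omega}\!\left(\sup_{0\le t\le K}|Y_t|_\infty>N\right)\le\frac{1}{N}\sum_{i=1}^d E^c_{0,0,\omega}[|Y_K\cdot e_i|]\le\frac{dC_U\sqrt{K}}{N}.
\]
Since $T_t\le t\le K\le N^2$, the exit through the $T$-coordinate cannot occur before $t=N^2$, so for $K\le N^2$ we have $\{\tau_1\le K\}\subset\{\sup_{0\le t\le K}|Y_t|_\infty>N\}$ and therefore $P^c_{0,0,\omega}(\tau_1\le K)\le dC_U\sqrt{K}/N$.

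Finally, the rest of the argument mirrors Lemma~\ref{L-one}: splitting
\[
E^c_{0,0,\omega}\!\left[\left(1-\tfrac{c}{N^2}\right)^{\tau_1}\right]\le P^c_{0,0,\omega}(\tau_1\le K)+\left(1-\tfrac{c}{N^2}\right)^{K},
\]
I would pick $K=(N/(4dC_U))^2$ so the first term is $\le 1/4$, and then choose $c$ large enough that $(1-c/N^2)^K\to e^{-c/(4dC_U)^2}\le 1/4$ for all sufficiently large $N$. The main conceptual step is the passage through the time change: the factor $(\upsilon_\omega+1)^{-1}$ simultaneously preserves the martingale property of the coordinate maps, produces uniformly bounded jump rates despite $\upsilon_\omega$ being potentially unbounded, and slows physical time so that $T_t\le t$. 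Once these three points are in hand, the Doob/truncation recipe of Lemma~\ref{L-one} carries over essentially unchanged.
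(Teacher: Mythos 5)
Your proposal is correct and follows essentially the same route as the paper: both exploit that $Y_\cdot$ is a martingale with jump rate $\le 1$ after the $(\upsilon_\omega+1)^{-1}$ time change, bound the second moment linearly in $t$, use $T_t\le t$ pathwise to rule out exit through the time coordinate before $N^2$, and then apply a Doob maximal inequality plus the truncation at time $K$ exactly as in Lemma~\ref{L-one}. The only cosmetic difference is that you invoke Doob's $L^1$ inequality on the submartingales $|Y_t\cdot e_i|$ while the paper uses the $L^2$ form on $|Y_t|^2$ (and you are actually a bit more careful with the constant $C_U^2$ vs.\ $C_U$), but the argument is the same.
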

\pf The proof follows similar argument as in Lemma~\ref{L-one}. 
Recall that $C_U=\max\{|e|:e\in U\}$. Note that $(Y_t)_{t\ge 0}$ is a martingale and 
$(|Y_t|^2-C_Ut)_{t\ge 0}$ is a super-martingale. Let $K=\frac{N^2}{2C_U}$. Then, by Doob's $L^2$-martingale inequality,
\begin{align*}
P^c_{0,0,\omega}(\tau_1\le K)
&={P^c_{0,0,\omega}}(\max_{0<t\le K}|Y_t|>N)
\\
&\le \frac{1}{N^2}
E^c_{0,0,\omega}
[|Y_K|^2]
\le \frac{C_UK}{N^2}=\frac{1}{2},
\end{align*}
where in the first equality we used the fact that $T_K\le K<N^2$.\qed

\begin{theorem}\label{inv-meas-Y}
Assume the same conditions as in Theorem~\ref{qclt-continuous}.  Then the environmental process $(\theta_{T_t,Y_t}\omega)_{t\ge 0}$ has a unique invariant probability measure $\bar\nu$ which is equivalent to $\Q$.
\end{theorem}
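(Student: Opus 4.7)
The plan is to mimic the discrete-time construction carried out in Section~\ref{proof-main}, with the periodization/maximum-principle strategy applied to the slowed-down process $(Y_t,T_t)$ rather than to $X_\cdot$ directly. The reason for working with $(Y_t,T_t)$ is that its generator $(\upsilon_\omega+1)^{-1}\mathcal L_\omega$ has uniformly bounded total jump rate, which makes Lemma~\ref{L-two} available and prevents the orbit under the time shifts from escaping to infinity before we periodize.

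First, fix an even integer $N$, periodize the environment $\omega$ in space and time to obtain $\omega^{(N)}$, and let $\Omega_N:=\{\theta_{t,x}\omega^{(N)}:(x,t)\in\R\times\Z^d\}$ equipped with its natural periodized reference measure $\mathbb Q_N$ (the continuous-time analogue of $\mathbb P_N$). On the compact space $\Omega_N$, the slowed-down environmental process $\bar\omega_t=\theta_{T_t,Y_t}\omega^{(N)}$ is a Feller Markov process with bounded generator, hence admits an invariant probability measure $\bar\nu_N$. Next, set $\rho:=1-c/N^2$ with $c$ as in Lemma~\ref{L-two}. By invariance and iteration of the strong Markov property at the stopping times $\tau_m(Y_\cdot,T_\cdot)$, for any continuous nonnegative $g$ on $\Omega_N$,
\[
(1-\rho)^{-1}\int g\,d\bar\nu_N
\le \max_{\xi\in\Omega_N}E^c_{0,0,\xi}\!\left[\int_0^\infty\rho^t g(\bar\omega_t)\,dt\right]
\le 2\max_{\xi\in\Omega_N}E^c_{0,0,\xi}\!\left[\int_0^{\tau_1}g(\bar\omega_t)\,dt\right].
\]

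The second step is to convert the right-hand side into a parabolic PDE problem and apply Theorem~\ref{max-cont}. For each $\xi\in\Omega_N$, define on $K_N=B_N\times[0,N^2]$
\[
f_\xi(x,t):=E^c_{x,t,\xi}\!\left[\int_0^{\tau}g(\bar\omega_s)\,ds\right],
\qquad \tau:=\inf\{s\ge 0:(Y_s,T_s)\notin K_N\}.
\]
By Dynkin's formula applied to the generator $(\upsilon_\omega+1)^{-1}\mathcal L_\omega$ of $(Y_\cdot,T_\cdot)$, the function $f_\xi$ solves $\frac{1}{\upsilon_\omega(x,t)+1}\mathcal L_\omega f_\xi(x,t)=-g(\theta_{t,x}\xi)$ in $K_N$ with zero parabolic boundary data on $K_N^p$. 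Multiplying through by $\upsilon_\omega+1$ yields
\[
\mathcal L_\omega f_\xi(x,t)=-(\upsilon_\omega(x,t)+1)\,g(\theta_{t,x}\xi)\qquad\text{in }K_N,
\]
so the maximum principle of Theorem~\ref{max-cont} (applied with $R\sim N$ and $T\sim N^2$) gives
\[
\max_{\xi\in\Omega_N}f_\xi(0,0)
\le C N^{d/(d+1)}\left\|(\upsilon_\omega+1)\,g/\varepsilon\right\|_{K_N,d+1}.
\]
Combining the two displays and rescaling the discrete $\ell^{d+1}$ norm into an $L^{d+1}(\mathbb Q_N)$ norm (the powers of $N$ match exactly as in the discrete-time case of Section~\ref{proof-main}) produces
\[
\int g\,d\bar\nu_N\le C\left\|(\upsilon_\omega+1)\,g/\varepsilon\right\|_{L^{d+1}(\mathbb Q_N)}.
\]

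The third step is to pass to the limit $N\to\infty$. Since $\mathbb Q_N\to\mathbb Q$ $\mathbb Q$-a.s.\ by the multidimensional ergodic theorem, and $\{\bar\nu_N\}$ is tight on the Polish space $\mathfrak H$, Prohorov's theorem yields a weakly convergent subsequence with limit $\bar\nu$. The limit $\bar\nu$ is an invariant probability measure for the environmental process $(\theta_{T_t,Y_t}\omega)_{t\ge 0}$ (checked by passing to the limit in the stationary equation, as in the discrete case). The inequality above, plus the moment condition \eqref{ellipt-cond-cont}, implies
\[
\int g\,d\bar\nu\le C\left(\mathbb E_{\mathbb Q}\!\left[(\upsilon_\omega+1)^{d+1}/\varepsilon^{d+1}\right]\right)^{1/(d+1)}\|g\|_{L^{d+1}(\mathbb Q)},
\]
so $\bar\nu\ll\mathbb Q$, and in fact $d\bar\nu/d\mathbb Q$ is controlled by the explicit density $(\upsilon_\omega+1)^{d+1}/\varepsilon^{d+1}$ modulo a constant. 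Finally, uniqueness (and the upgrade to equivalence with $\mathbb Q$) follows from a continuous-time analogue of Kozlov's Theorem~\ref{kozlov}, whose proof is identical in spirit: the ellipticity of $\omega$ together with the ergodicity of the family $\{\theta_{s,x}:s>0,\,x\in U\}$ under $\mathbb Q$ forces any Radon--Nikodym derivative to be $\mathbb Q$-a.s.\ constant on its support.

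The main obstacle will be in Step 2: we must justify Dynkin's formula for the slowed-down process $(Y_t,T_t)$ in a periodized environment that has degenerate ellipticity, then correctly track the factor $\upsilon_\omega+1$ through the maximum principle so that the resulting bound is integrable under $\mathbb Q$ thanks to \eqref{ellipt-cond-cont}. Everything else is a faithful translation of the argument in Section~\ref{proof-main}.
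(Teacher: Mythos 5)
Your proposal mirrors the paper's own proof almost line for line: periodize $\omega$ in space--time, obtain an invariant measure $\bar\nu_N$ for the slowed-down environmental process on the finite-volume periodization, reduce $\int g\,d\bar\nu_N$ via Lemma~\ref{L-two} and the strong Markov property to the expected occupation time before $\tau_1$, recognize that this expected occupation time solves $(\upsilon_\omega+1)^{-1}\mathcal L_\omega f_\xi=-g$ with zero data on the parabolic boundary, rewrite it as $\mathcal L_\omega f_\xi=-(\upsilon_\omega+1)g$ so Theorem~\ref{max-cont} applies, and then pass $N\to\infty$ using $\mathbb Q_N\to\mathbb Q$, the moment condition \eqref{ellipt-cond-cont}, Prohorov's theorem, and a continuous-time Kozlov argument for uniqueness and equivalence. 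This is exactly the structure of the paper's Section~\ref{section-continuous}, so the proposal is correct and not a different route. The only point worth flagging is that the ``Dynkin's formula'' concern you raise at the end is not a genuine obstacle: since the slowed-down generator $(\upsilon_\omega+1)^{-1}\mathcal L_\omega$ has total jump rate uniformly bounded by $1$, the process is a standard non-explosive pure-jump Markov chain on a finite state space after periodization, and the occupation-time identity is elementary; the paper simply states the PDE without comment for precisely this reason.
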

\pf Let $Q_N=\{z\in\Z^d:|z|_\infty\le N\}\times [0,N^2)$. 
  We introduce on $\Z^d$ the equivalent classes
\[
\overline{(x,t)}:=(x,t)+(2N+1)\Z^d\times N^2\Z.
\]
Fix a balanced environment $\omega\in\mathcal Q$, and define its periodized environment $\omega^{(N)}$ so that for any $(x,t)\in Q_N$, 
\[
\omega^{(N)}_s(y)=\omega_t(x)
\]
whenever $\overline{(y,s)}=\overline{(x,t)}$.

Set 
\[
\Omega_{N,\omega}=\Omega_N:=\{\theta_{t,x}\omega^{(N)}: (x,t)\in Q_N\}
\]
and let $\P_N=\P_{N,\omega}$ denote the probability measure  
\[
\P_N(\dd\xi)=
\frac{1}{N^{d+2}}\sum_{x:(x,t)\in Q_N}1_{ \theta_{t,x}\omega^{(N)}=\xi}\dd t.
\]
Under the environment $\omega^{(N)}$, recall that $Y_t$ is the slowed-down process. 
Since under $P^c_{0,0,\omega^{(N)}}$, the process $\overline{(Y_t,T_t)}_{t\ge 0}$ is a Markov process on the compact set $Q_N$, it has an invariant distribution whose  density we denote by $\phi_N(x,t)\delta_x\dd t$, with $(x,t)\in Q_N$ and $\delta_x$ denotes the Dirac mass. As in the proof of Theorem~\ref{qclt}, the probability measure $\nu_N\ll \P_N$ defined by 
\[
\nu_N(\dd\xi)=\sum_{x:(x,t)\in Q_N}\phi_N(x,t)1_{\theta_{t,x}\omega^{(N)}=\xi}\dd t,
\]
is an invariant distribution of the Markov process $(\theta_{T_t,Y_t}\omega^{(N)})_{t\ge 0}$.

For $\xi\in\Omega_N$, let $\xi_t:=\theta_{T_t,Y_t}\xi$ denote the environmental process.
By similar arguments as in Section~\ref{proof-main}, Lemma~\ref{L-two} implies that
for any bounded continuous function $g$ on $\Omega^\N$,
\[
\int g \dd\nu_N\le CN^{-2}\max_{\xi\in\Omega_N}E_{0,0,\xi}^c\left[\int_0^{\tau_1}g(\xi_t)\dd t\right].
\]
Letting 
\[
u(x,t)=E_{0,0,\theta_{t,x}\xi}^c\left[\int_0^{\tau_1}g(\xi_s)\dd s
\right],
\]
we have
\[
\left\{
\begin{array}{rl}
(\upsilon_\omega+1)^{-1}\mathcal L_\xi u
=-g(\theta_{t,x}\xi)
&\mbox{in }Q_N\\
u=0\quad &\mbox{in }Q_N^p,
\end{array}
\right.
\]
Then, applying Theorem~\ref{max-cont} to the operator $\mathcal L_\xi$, we get 
\[
\max_{Q_N}u\le CN^2\norm{(\upsilon+1)g/\varepsilon}_{L^{d+1}(\P_N)}
\]
and so 
\[
\int g\dd\nu_N\le C\norm{(\upsilon+1)g/\varepsilon}_{L^{d+1}(\P_N)}.
\]
Since, $\lim_{N\to\infty}\P_{N,\omega}=\Q, \,\Q$-a.s. and 
\[E_\Q[(\upsilon+1)^{d+1}/\varepsilon_\omega^{d+1}]
\le 
2^dE_\Q[(\upsilon^{d+1}+1)/\varepsilon_\omega^{d+1}]<\infty,
\] using the ergodic theorem and Kozlov's argument, the conclusion follows.\qed 

\begin{corollary}
Assume the same conditions as in Theorem~\ref{qclt-continuous}.
For $\Q$-almost all $\omega$, $P_{0,0,\omega}^c$-almost surely the process $(X_t)_{t\ge 0}$ does not explode. Moreover, the environmental process $(\theta_{t,X_t}\omega)_{t\ge 0}$ has a unique invariant probability measure $\nu$ which is equivalent to $\Q$.
\end{corollary}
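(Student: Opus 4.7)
The plan is to derive both conclusions from Theorem~\ref{inv-meas-Y} combined with the pathwise identity $X_{T_t}=Y_t$ of \eqref{X-T}. Writing $\bar\omega^Y_t:=\theta_{T_t,Y_t}\omega$ for the slowed-down environmental process, I first settle non-explosion. The total jump rate of $Y$ at state $\bar\omega^Y_t$ equals $\upsilon(\bar\omega^Y_t)/(\upsilon(\bar\omega^Y_t)+1)\le 1$, so $Y$ is non-explosive $P^c_{0,0,\omega}$-a.s. Since $t\mapsto T_t$ is strictly increasing, $X_s=Y_{T^{-1}(s)}$ is well-defined on $[0,T_\infty)$, and it remains to show $T_\infty=\infty$. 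By \eqref{eq-T},
\[
T_t=\int_0^t \frac{1}{\upsilon(\bar\omega^Y_r)+1}\,\dd r,
\]
and a continuous-time analogue of Theorem~\ref{kozlov}(ii), derivable from the ergodic family $\{\theta_{s,x}:s>0,x\in U\}$ and ellipticity, implies that $(\bar\omega^Y_t)$ is an ergodic stationary process under $\bar\nu$. Birkhoff's theorem then gives
\[
\lim_{t\to\infty}\frac{T_t}{t}=\int\frac{1}{\upsilon+1}\,\dd\bar\nu>0
\]
$\bar\nu$-a.s., and hence $\Q$-a.s. since $\bar\nu\sim\Q$. Thus $T_\infty=\infty$ and $X$ does not explode.

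Next, I construct $\nu$ as a time-change reweighting of $\bar\nu$ by setting
\[
\dd\nu:=\frac{1}{C}\frac{1}{\upsilon+1}\,\dd\bar\nu,\qquad C:=\int\frac{1}{\upsilon+1}\,\dd\bar\nu\in(0,1],
\]
which is a probability measure because $(\upsilon+1)^{-1}\in(0,1]$. Since $\bar\nu$ is invariant for the generator $(\upsilon+1)^{-1}\mathcal L_\omega$ of $(\bar\omega^Y_t)$, for every test function $f$ in a suitable core one has
\[
\int\mathcal L_\omega f\,\dd\nu=\frac{1}{C}\int(\upsilon+1)^{-1}\mathcal L_\omega f\,\dd\bar\nu=0,
\]
so $\nu$ is invariant for the $X$-environmental process $(\theta_{t,X_t}\omega)_{t\ge 0}$. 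The equivalence $\nu\sim\Q$ follows from $\bar\nu\sim\Q$ together with $(\upsilon+1)^{-1}>0$ $\Q$-a.s.

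Uniqueness comes from a continuous-time version of Theorem~\ref{kozlov}: any other invariant probability measure $\nu'\ll\Q$ for $(\theta_{t,X_t}\omega)$ is upgraded to $\nu'\sim\Q$ and shown to render the environmental process ergodic by the Kozlov argument adapted to the ergodic family $\{\theta_{s,x}:s>0,x\in U\}$, after which two equivalent ergodic invariant measures must coincide. The main obstacle I expect is the rigorous verification of the identity $\int\mathcal L_\omega f\,\dd\nu=0$ on a class of $f$ rich enough to characterize invariance, despite the unbounded jump rates built into $\mathcal L_\omega$, and the associated task of transferring Theorem~\ref{kozlov} to the continuous-time, non-uniformly-elliptic setting in parallel with the discrete argument of Section~\ref{proof-main}.
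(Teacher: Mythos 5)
Your proposal is correct and follows essentially the same route as the paper: apply the ergodic theorem (via Theorem~\ref{inv-meas-Y} and \eqref{eq-T}) to show $T_t/t\to E_{\bar\nu}[1/(\upsilon+1)]>0$, deduce non-explosion via $X_{T_t}=Y_t$, and define $\nu$ by the density $1/(\upsilon+1)$ against $\bar\nu$ with normalization, inheriting equivalence to $\Q$ and uniqueness from the slowed-down process. You spell out more explicitly than the paper does the verification that $\nu$ is invariant for $\mathcal L_\omega$ and the transfer of Kozlov's argument to continuous time, but these are exactly the steps the paper's terse proof implicitly invokes.
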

\pf 
Set $\bar\omega_t:=\theta_{t,X_t}\omega$. 
By \eqref{eq-T}, Theorem~\ref{inv-meas-Y} and the ergodic theorem, 
\[
\lim_{t\to\infty}\frac{T_t}{t}
=E_{\bar\nu}[\frac{1}{\upsilon+1}]\in(0,1) \qquad \mbox{$\Q\otimes P_{0,0,\omega}$-a.s}.
\]
Hence, by \eqref{X-T}, $(X_t)_{t\ge 0}$ is not explosive. 
Furthermore, let 
\[
\dd\nu:=\frac{N}{\upsilon+1}\dd\bar\nu,
\]  where $N=(E_{\bar\nu}[\frac{1}{\upsilon+1}])^{-1}$ is a normalization constant. Then $\nu$ is the invariant measure of $(\bar\omega_t)_{t\ge 0}$ and it is equivalent to $\Q$.\qed

Theorem \ref{qclt-continuous} (i) is proved in the above corollary. As in Theorem~\ref{qclt}, this implies the invariance principle Theorem \ref{qclt-continuous} (ii).

\section{Proof of Corollary \ref{qclt-static}}
\label{section-application}
Recall the definitions of $x^{(1)}, x^{(2)}$ before Corollary~\ref{qclt-static}. We set 
\[
Y_n:=(X^{(1)}_n,0)\in\Z^{d_1+d_2} \quad\text{and }
Z_n:=(0,X^{(2)}_n)\in\Z^{d_1+d_2},
\] 
so that $X_n=Y_n+Z_n$. For each $n\ge 0$, denote by
$\mathcal F^Y$ the $\sigma$-algebra generated
by $\{Y_0,Y_1,\ldots\}$. Furthermore, we define a time-dependent environment $\omega^Y$ on $\mathcal P^{\Z^{d_2}}$ by
\[
\omega^Y_n(z,e)
:=
\frac{\omega\left(Y_n+(0,z), Y_{n+1}-Y_n+(0,e)\right)}
{\omega(Y_n,Y_{n+1}-Y_n)}
\quad
\text{ for }z,e\in\Z^{d_2}, n\in\N.
\]
%
%
%

\begin{lemma}\label{2nd coord} $\P\times P_{0,\omega}$-a.s. under the law $P_{0,\omega}(\cdot|\mathcal F^Y)$, $\{X^{(2)}_n:n\ge 0\}$ is a random walk
on the lattice $\mathbb Z^{d_2}$
in the time dependent environment $\omega^Y$.
\end{lemma}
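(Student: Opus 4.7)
The plan is to compute the conditional law of $X^{(2)}_{n+1}$ given $\mathcal{F}^Y$ and $(X_0,\dots,X_n)$ by a Bayes' rule argument that crucially uses the autonomy assumption (a) together with the Markov property of $(X_n)$.

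First I would reduce the full conditioning on $\mathcal{F}^Y$ to conditioning only on $Y_{n+1}$. The point is that autonomy implies the conditional distribution of $(X^{(1)}_{n+2}, X^{(1)}_{n+3},\dots)$ given $X_{n+1}$ depends only on $X^{(1)}_{n+1}=Y^{(1)}_{n+1}$, so the ``future'' part of $\mathcal{F}^Y$, namely $(Y_{n+2},Y_{n+3},\dots)$, is conditionally independent of $X^{(2)}_{n+1}$ given $(X_0,\dots,X_n,Y_{n+1})$. An application of the tower property therefore yields
\[
P_{0,\omega}\bigl(X^{(2)}_{n+1}-X^{(2)}_n=e\,\big|\,\mathcal{F}^Y,X_0,\dots,X_n\bigr)
=
P_{0,\omega}\bigl(X^{(2)}_{n+1}-X^{(2)}_n=e\,\big|\,Y_{n+1},X_n\bigr).
\]

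Next, setting $e^{(1)}:=Y^{(1)}_{n+1}-Y^{(1)}_n$ and writing $X_n=Y_n+(0,X^{(2)}_n)$, the Markov property of $(X_n)$ together with Bayes' formula gives
\[
P_{0,\omega}\bigl(X^{(2)}_{n+1}-X^{(2)}_n=e\,\big|\,Y_{n+1},X_n\bigr)
=
\frac{\omega(X_n,(e^{(1)},e))}{\sum_{f}\omega(X_n,(e^{(1)},f))}.
\]
Autonomy identifies the denominator with $\sum_{f}\omega(Y_n,(e^{(1)},f))$, which is the one-step transition probability of the autonomous $X^{(1)}$-chain. The nearest-neighbor structure $U=\{e:|e|\le 1\}$ then makes this marginal collapse to $\omega(Y_n,(e^{(1)},0))=\omega(Y_n,Y_{n+1}-Y_n)$ whenever $e^{(1)}\ne 0$ (since no legal jump moves both coordinates simultaneously), so the ratio coincides with $\omega^Y_n(X^{(2)}_n,e)$. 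The analogous identification in the case $Y_{n+1}=Y_n$ is built into the convention used in the definition of $\omega^Y_n$, and in all cases the resulting transition depends on the past only through $X^{(2)}_n$. Hence $(X^{(2)}_n)_{n\ge 0}$ is a (time-inhomogeneous) Markov chain on $\mathbb{Z}^{d_2}$ in the environment $\omega^Y$.

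The computational steps are routine once the right conditioning is set up; the real work is in Step~1, verifying that the \emph{future} $Y$-values add no information about $X^{(2)}_{n+1}$ beyond $Y_{n+1}$. This is the only place where autonomy is essential, and without it the process $(X^{(2)}_n)$ would in general fail to be Markov under $P_{0,\omega}(\cdot\,|\,\mathcal{F}^Y)$, because observing, say, $Y^{(1)}_{n+2}$ could give information about the environment around $X_{n+1}$ and hence about the unobserved second coordinate.
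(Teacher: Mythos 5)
Your argument is correct and takes essentially the same route as the paper's: both compute the conditional transition probability via Bayes' rule, then invoke the Markov property together with the autonomy assumption (a) to cancel the dependence on the future $Y$-values, arriving at a ratio that depends on the past only through $X_n$ and $Y_{n+1}-Y_n$. You organize this as a conditional-independence step followed by a one-step Bayes computation, and you make explicit the role of the nearest-neighbor jump range in collapsing $\sum_f\omega(Y_n,(e^{(1)},f))$ to $\omega(Y_n,Y_{n+1}-Y_n)$, whereas the paper performs the same cancellations inside a single Bayes display; the content is identical.
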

\begin{proof} 
For $0\le m<n-1$ and any two sequences
$z_1,\ldots,z_{m+1}\in\mathbb Z^{d_2}$ and
$y_1,\ldots,y_n\in\mathbb Z^{d_1}$,
we let $x_i=(y_i,z_i)$ for $1\le i\le m+1$. 
By the Markov property, 
\begin{align*}
&P_{0,\omega}(X^{(2)}_{m+1}=z_{m+1}|X^{(2)}_1=z_1,\ldots,X^{(2)}_m=z_m,X^{(1)}_1=y_1,\ldots,X^{(1)}_n=y_n)\\
&=
\frac{P_{0,\omega}(X_1=x_1,\ldots, X_{m+1}=x_{m+1}, X^{(1)}_{m+2}= y_{m+2},\ldots, X^{(1)}_n=y_n)}
{P_{0,\omega}(X_1=x_1,\ldots, X_m=x_m, X^{(1)}_{m+1}=y_{m+1},\ldots, X^{(1)}_n=y_n)}\\
&=\frac{P_{0,\omega}(X^{(1)}_{m+2}=y_{m+2},\ldots, X^{(1)}_n=y_n|X_{m+1}=x_{m+1}) P_{0,\omega}(X_{m+1}=x_{m+1}|X_m=x_m)}
{P_{0,\omega}(X^{(1)}_{m+1}= y_{m+1},\ldots, X^{(1)}_n=y_n|X_m=x_m)}\\
&=
\frac{\omega(x_m,x_{m+1}-x_m)}{P_{x_m,\omega}(X_1^{(1)}=y_{m+1})},
\end{align*}
where in the last equality we used condition (a) which says that $X^{(1)}_n$ is a Markov chain.  We finish the proof by observing that
\[
\frac{\omega(x_m,x_{m+1}-x_m)}{P_{x_m,\omega}(X_1^{(1)}=y_{m+1})}
=
\omega^Y_m(z_m,z_{m+1}-z_m)\big|_{Y_i=y_i,i=1,\ldots,n}.
\]
\end{proof}

\noindent{\it Proof of Corollary~\ref{qclt-static}:}

Let $\Omega=\mathcal P^{\Z^{d_1+d_2}}$.
We will only consider the non-trivial case where $\P$-almost surely, $\theta_{e_i}\o\neq\o$ for all $i=1,\ldots,d_1$. (Otherwise, since $\{\omega:\theta_{e_i}\o=\o\}$ is shift-invariant under all shifts $\{\theta_x:x\in\Z^{d_1+d_2}\}$, it follows by ergodicity that $\P(\theta_{e_i}\o=\o)=1$ for some $i\in\{1,\ldots,d_1\}$, which then implies that every measurable set $A\subset\Omega$ is shift-invariant under the shift $\theta_{e_i}$. By our ergodicity assumption, we conclude that $\P$ is a singleton, i.e, $\P(\omega=\xi)=1$ for some $\xi\in\Omega$. In this case, the RWRE is a simple random walk and the QCLT is trivial.)

For any $z\in\Z^{d_2}$, we denote by $\hat z:=(0,z)\in\Z^{d_1+d_2}$ so that $\hat z^{(2)}=z$. 
Our proof contains several steps.
\begin{enumerate}[\text{Step }1.]
\item
Set $\tilde\omega_n:=\theta_{Y_n}\omega$. 
By the ergodic theorem, the measure $\nu$ that satisfies the properties in condition (b) of Corollary~\ref{qclt-static} is unique.  
Let us denote by $Q_\nu$ the law of $(\tilde\omega_n)_{n\ge 0}$
starting from $\nu$. 

\item 
We will show that the law $Q_\nu$ of the space-time environment 
$\{\tilde\omega_n(x): (x,n)\in \Z^{d_1+d_2}\times\N\}$
 is translation invariant under the spatial shifts $\{\theta_{0,\hat z}:z\in\Z^{d_2}\}$. 
To this end, it is enough to prove that $\nu$, as a measure on the static environments $\Omega$, is translation invariant under these spatial shifts.
 Indeed, since by condition (a), the law of $\{X^{(1)}_n:n\in\N\}$ under $P_{x,\omega}$ depends only on the first $d_1$ coordinates of $x$, we conclude that for any $z\in\Z^{d_2}$, the law $\hat\nu_z$ defined by $\hat\nu_z(A):=\nu(\theta_{\hat z}A)$ is still an invariant measure for the Markov chain $(\theta_{Y_n}\omega)_{n\in\N}$. Furthermore, by the stationarity of $\P$ under the spatial shifts, $\hat\nu_z$ is also equivalent to $\P$.  Therefore by the uniqueness of $\nu$,  we have $\hat\nu_z=\nu$ for any $z\in\Z^{d_2}$ and so $Q_\nu$ is  translation invariant under $\{\theta_{0,\hat z}:z\in\Z^{d_2}\}$.
\item
Next, we claim that $\P\times P_{0,\omega}$-almost surely, $\omega_n^Y$ can be written 
as a function of $\tilde\omega_n$ and $\tilde\omega_{n+1}$, $n\ge 0$.
Indeed, for any $n\ge 0$,
\begin{align*}
Q_\nu\left(\tilde\o_n=\tilde\o_{n+1}, Y_n-Y_{n+1}\neq 0\right)
&\le 
Q_\nu\left(\theta_{e_i}\tilde\o_n=\tilde\omega_n\,\text{ for some }i\in\{1,\ldots,d_1\}\right)\\
&=
\nu\left(\theta_{e_i}\o=\o \,\text{ for some }i\in\{1,\ldots,d_1\} \right)\\
&=0,
\end{align*}
where in the first equality we used that $(\tilde\o_n)_{n\ge 0}$ is a stationary sequence under $Q_\nu$ and in the last equality we used $\nu\approx\P$ and the assumption at the beginning of the proof.  Hence, the events $\{Y_n= Y_{n+1}\}$ and $\{\tilde\o_n=\tilde\o_{n+1}\}$ are equivalent.
 In particular, we write for $z,e\in\Z^{d_2}$,
\begin{equation}\label{yomega}
\omega^Y_n(z,e)
=
\frac{\tilde\omega_n(\hat z,\hat e)}{\tilde\omega_n(0,0)}
\mb 1_{\tilde\omega_n=\tilde\omega_{n+1}}+\mb 1_{e=0, \,\tilde\o_n\neq\tilde\o_{n+1}}.
\end{equation}
\item
Notice that $\omega^Y$ is not an elliptic environment. However, we will show that under a time change, $X_n^{(2)}$ is a random walk in an ergodic uniformly-elliptic random environment (conditioning on $Y_\cdot$). 
Indeed, let $D=\{\xi=(\xi_i)_{i\ge 0}\in\Omega^\N:\xi_0=\xi_1\}$ and
\[
\phi_D(\tilde\o)=\inf\{n\ge 0:\theta_{n,0}\tilde\o\in D\}=\inf\{n\ge 0: \tilde\omega_n=\tilde\omega_{n+1}\}.
\]
Since $Q_\nu(\tilde\omega\in D)=E_{Q_\nu}[P_{0,\o}(Y_1=0)]>0$ and the law $Q_\nu$ of $\tilde\omega$ is ergodic under the time shift $\theta_{1,0}$, by ergodicity, $\phi_D<\infty$ almost surely. Moreover, defining the induced shift $T_D:\Omega^\N\to\Omega^\N$ as
\[
T_D\tilde\omega:=\theta_{\phi_D(\tilde\omega),0}\tilde\omega,
\]
then $(T_D^k\tilde\omega)_{k\ge 1}$ (under law $Q_\nu$) is still an ergodic sequence, c.f. \cite[Theorem~1.6]{Sar-09}.
Recall that by \eqref{yomega},  $\omega_n^Y$ is a function of $\tilde\omega_n$ and $\tilde\omega_{n+1}$. Hence, by ergodicity of the sequence $(\tilde\omega_n)_{n\ge 0}$, the time-dependent environment $\zeta^{Y}\in\mathcal P^{\Z^{d_2}\times\N}$ defined by
\[
\zeta^Y_n(x,e):=(T_D^{n+1}\o^Y)_0(x,e), \quad x\in\Z^{d_2}, n\ge 0
\]
is ergodic under time-shifts and stationary (by Step 2) under the spatial shifts.
We thus conclude that $\zeta^Y$ is a uniformly-elliptic balanced time-dependent random environment  which (conditioning on $Y_\cdot$ and under $Q_\nu$) is ergodic with respect to the space-time shifts $\{\theta_{1,z}: |z|\le 1, z\in\Z^{d_2}\}$. Furthermore, define recursively random times $\phi_0:=\phi_D$ and
\[
\phi_{i+1}(\tilde\omega):=\inf\{n>\phi_i: \tilde\omega_n=\tilde\omega_{n+1}\}.
\]
Then, conditioning on $Y_\cdot$,
\begin{equation}\label{timechange}
W'_n:=X_{\phi_n(\tilde\omega)}^{(2)}, \quad n\ge 0
\end{equation}
is a random walk in the time-dependent environment $\zeta^Y$ defined above.

\item 
Now, we will prove a QCLT for $X^{(2)}_n$. First, since for $\tilde\omega$ sampled according to $Q_\nu$, the sequence  $W_n'$ is a random walk in a uniformly elliptic and ergodic (with respect to $\{\theta_{1,z}: |z|\le 1,z\in\Z^{d_2}\}$) balanced environment with jump range $\{z\in\Z^{d_2}: |z|\le 1\}$, by Theorem~\ref{qclt}, we obtain for $W_n'$ a QCLT with non-degenerate $d_2\times d_2$ covariance matrix. Then, noticing that by Kac's formula, for $Q_\nu$-almost every $\tilde\o$,
\[
\lim_{n\to\infty} \frac{\phi_n(\tilde\o)}{n}
=\frac{1}{Q_\nu(\tilde\o\in D)}
=\frac{1}{E_{Q_\nu}[P_{0,\o}(Y_1=0)]}<\infty,
\]
with a standard time-change argument, we conclude a QCLT for $X_n^{(2)}$ (conditioning on $Y_\cdot$). That is, 
$\P$-a.s,  for almost all trajectories $\{Y_0,Y_1,\ldots \}$
and any open $B\in C([0,\infty);\mathbb Z^{d_2})$,
\begin{equation}
\label{qclt-cond}
\lim_{n\to\infty}P_{0,\omega}\left(\left.\frac{X^{(2)}_{[n\cdot]}}{\sqrt{n}}\in B\right|\mathcal F^Y
\right)= Q(B),
\end{equation}
where $Q$ denotes the law of a
Brownian motion on $\R^{d_2}$ with a deterministic non-degenerate covariance matrix.
\item 
With condition (c), we can now conclude that for any pair of 
open sets $A\in C([0,\infty);\mathbb Z^{d_1})$
and $B\in C([0,\infty);\mathbb Z^{d_2})$,
\[
P_{0,\omega}\left(\frac{X^{(1)}_{[n\cdot]}-v_1 n\cdot}{\sqrt{n}}\in A,
\frac{X^{(2)}_{[n\cdot]}}{\sqrt{n}}\in B\right)
=
E_{0,\omega}\left[\mathbb 1_{X^{(1)}_{[n\cdot]}-v_1 n\cdot/\sqrt n\in A}
P_{0,\omega}\left(\left.\frac{X^{(2)}_{[n\cdot]}}{\sqrt{n}}\in B\right|\mathcal F^Y
\right)\right],
\]
which by (\ref{qclt-cond}) converges to 
the probability that a Brownian motion (with a deterministic non-degenerate
covariance matrix) in $\mathbb R^{d_1+d_2}$ belongs to $A\times B$. Using the
fact that any open set in $C([0,\infty):\mathbb Z^{d_1+d_2})$
is a countable union of sets of the
form $A\times B$,  we conclude the proof. 
\qed
\end{enumerate}

\section{
An example that QCLT fails when the environment is not ergodic enough
}

\label{section-counterexample}
Here we show that the QCLT could fail if the ergodicity hypothesis of Theorem~\ref{qclt} is weakened. 

Consider a discrete time-dependent balanced random environment on $\Z^2$.  
Let 
$p=\{p(e):|e|_1=1\},\, q=\{q(e):|e|_1=1\}$ be two probability vectors such that
$p(e)=\frac{1}{4}$ for all $e\in\Z^2$ with $|e|_1=1$, and $q(\pm e_1)=\frac{1}{6},\, q(\pm e_2)=\frac{1}{3}$.
For any $(x,n)\in\Z^2\times\Z$, define two space-time environments $\xi, \xi'$ such that
\[
\xi_n(x,e)=\left\{
\begin{array}{rl}
p(e) &\mbox{ if $|x|_1+n$ is even}\\
q(e) &\mbox{ if $|x|_1+n$ is odd}
\end{array}
\right.
\]
\[
\xi'_n(x,e)=\left\{
\begin{array}{rl}
p(e) &\mbox{ if $|x|_1+n$ is odd}\\
q(e) &\mbox{ if $|x|_1+n$ is even}.
\end{array}
\right.
\]
Define the environment measure $\P$ to be
\[
\P(\omega=\xi)=\P(\omega=\xi')=\frac{1}{2}.
\]
Noting that $\theta_{1,0}\xi=\xi'$ and the jump range $U=\{e\in\Z^d:|e|=1\}$. The measure $\P$ is 
ergodic
under the shifts $\{\theta_{1,e}:|e|\le 1\}$. However, $\mb P$ is not ergodic under the set of shifts $\{\theta_{1,e}: e\in U\}$ and the QCLT fails, since $X_{n\cdot}/\sqrt n$ converges to a Brownian motion with a random covariance matrix
\[
\Sigma(\omega)=\begin{pmatrix}
1/2&0\\0&1/2
\end{pmatrix}1_{\omega=\xi}
+
\begin{pmatrix}
1/3&0\\0&2/3
\end{pmatrix}1_{\omega=\xi'}.
\]

\end{document}